\newtheorem{theorem}{Theorem}[section]
\newtheorem{example}[theorem]{Example}
\newtheorem{lemma}[theorem]{Lemma}
\newenvironment{proof}[1][Proof]{\textbf{#1.} }{\ \rule{0.5em}{0.5em}}
\begin{document}

\author{Miko\l aj Pep\l o\'{n}ski}
\title{On the existence and multiplicity of solutions for a fourth order
discrete BVP}
\maketitle

\begin{abstract}
We investigate the existence and multiplicity of solutions for fourth order
discrete boundary value problems via critical point theory.
\end{abstract}

\section{Introduction}

Difference equations have been applied as mathematical models in diverse
areas, such as finanse insurance, economy,disease control, biology, physics,
mechanics, computer science - see \cite{elyadi}. It is important to know the
conditions which guarantee the existence and muliplicity of solutions.

For fixed $a,b\in 
\mathbb{Z}
$ we define $Z[a,b]=[a,b]\cap 
\mathbb{Z}
$. and $\Delta $ is the forward difference operator 
\begin{equation*}
\Delta x(k)=x(k+1)-x(k).
\end{equation*}

In this note we consider a Dirichlet boundary value problem (briefly BVP)
for a fourth order discrete equation

\begin{equation}
\left\{ 
\begin{array}{l}
\Delta ^{2}(p(k)\Delta ^{2}y(k-2))+\Delta (q(k)\Delta y(k-1))+f(k,y(k))=0%
\text{ for }k\in Z[1,N]\bigskip \\ 
y(-1)=y(0)=y(N+1)=y(N+2)=0%
\end{array}%
\right.  \label{BVP}
\end{equation}

Where $N\geq 1$, $f:Z[1,N]\times 
\mathbb{R}
\rightarrow 
\mathbb{R}
$ is continous on its second variable for all $k\in Z[1,N]$, $%
p:Z[1,N+2]\rightarrow 
\mathbb{R}
$, $q:Z[1,N+1]\rightarrow 
\mathbb{R}
$. By a solution to a problem (\ref{BVP}) we mean such a function $%
y:Z[-1,N+2]\rightarrow 
\mathbb{R}
$ which satisfies the difference equation and the given boundary conditions.
The main purpose of this paper is to study the multiplicity of solutions to
BVP(\ref{BVP}) and obtain that is has at least $2N$ distinct solutions
assuming some conditions. Our results are based on \cite{AppMathLett} and 
\cite{limmingGao} by extending these to the case of fourth order discrete
equations.

Lets mention, far from being exhaustive, the following recent papers on
discrete BVPs investigated via critical point theory, \cite{agrawal}, \cite%
{caiYu}, \cite{Liu}, \cite{sehlik}, \cite{TianZeng}, \cite{teraz}, \cite%
{zhangcheng}, \cite{nonzero}. These papers employ in the discrete setting
the variational techniques already known for continuous problems of course
with necessary modifications. The tools employed cover the Morse theory,
mountain pass methodology, linking arguments.

For the sake of convinience lets recall some basic facts and definitions
used in this note, \cite{bdeimling}

Let $X$ be a real Banach space and let $T:X\rightarrow 
\mathbb{R}
$. For fixed $x,h\in X$, symbol $\delta T(x;h)$ stands for the Gateaux
derivative of $T$ at point $x$ and direction $h$, while $T^{\prime }(x)$
stands for the Frechet or strong derivative of $T$ at point $x$. By $C^{1}(X,%
\mathbb{R}
)$ we denote the set of continously (Frechet) differentable functionals on $%
X $.

If $X=X_{1}\times ...\times X_{n}$ where $X_{i}$ are one-dimensional Banach
spaces we say that $T$ has partial derivative at point $x\in X$ on $X_{i}$
if there exists $L_{x}\in L(X,%
\mathbb{R}
)$.such that%
\begin{equation*}
\lim_{h\rightarrow \theta }\frac{||T(x+he_{i})-T(x)-L_{x}h||}{||h||}=0
\end{equation*}

Where $e_{i}\in X_{i}$ is the unit vector. The following lemma from \cite%
{Maurin} gives us an useful relation between continuity of partial
derivatives and being $C^{1}$.

\begin{lemma}
\label{Pczastk} Let $T:U\rightarrow 
\mathbb{R}
$ where $U\subset X$ is an open set. Then $T$ is continously differentable
on $U$ if and only if functions 
\begin{equation*}
U\backepsilon x\rightarrow J_{X_{i}}^{\prime }(x)\in L(X_{i},%
\mathbb{R}
),
\end{equation*}
are continous on $U$ for $i=1,...,n.$
\end{lemma}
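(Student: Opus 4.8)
The plan is to prove the two implications separately, with the reverse direction (continuity of the partials $\Rightarrow$ $C^{1}$) carrying essentially all of the content.

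For the easy direction I would assume $T\in C^{1}(U,\mathbb{R})$ and observe that the partial derivative $J_{X_{i}}^{\prime }(x)$ is simply the restriction of the Fr\'{e}chet derivative $T^{\prime }(x)$ to the subspace $X_{i}$, i.e. $J_{X_{i}}^{\prime }(x)=T^{\prime }(x)\circ \iota _{i}$, where $\iota _{i}:X_{i}\rightarrow X$ is the canonical inclusion. Since the map $S\mapsto S\circ \iota _{i}$ from $L(X,\mathbb{R})$ to $L(X_{i},\mathbb{R})$ is linear and bounded (its operator norm is at most $\Vert \iota _{i}\Vert $), and $x\mapsto T^{\prime }(x)$ is continuous by hypothesis, the composition $x\mapsto J_{X_{i}}^{\prime }(x)$ is continuous for each $i$.

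For the converse I would first produce the candidate derivative: writing $h\in X$ uniquely as $h=\sum_{i=1}^{n}h_{i}$ with $h_{i}\in X_{i}$, set $L_{x}(h):=\sum_{i=1}^{n}J_{X_{i}}^{\prime }(x)(h_{i})$, which is visibly linear and bounded. The heart of the argument is a telescoping decomposition of the increment in which one coordinate is varied at a time: with $s_{0}=x$ and $s_{i}=x+\sum_{j\leq i}h_{j}$, one has
\[
T(x+h)-T(x)=\sum_{i=1}^{n}\bigl( T(s_{i})-T(s_{i-1})\bigr) .
\]
For each $i$ the points $s_{i-1}$ and $s_{i}=s_{i-1}+h_{i}$ differ only in the $X_{i}$-coordinate, so---crucially using that $X_{i}$ is one-dimensional---the function $t\mapsto T(s_{i-1}+te_{i})$ is a real function of one real variable whose derivative is $J_{X_{i}}^{\prime }(\cdot )(e_{i})$, and the classical mean value theorem produces a point $\xi _{i}$ on the segment joining $s_{i-1}$ to $s_{i}$ with $T(s_{i})-T(s_{i-1})=J_{X_{i}}^{\prime }(\xi _{i})(h_{i})$. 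For $h$ small enough the whole segment lies in $U$ by openness.

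Subtracting $L_{x}(h)$ then yields
\[
T(x+h)-T(x)-L_{x}(h)=\sum_{i=1}^{n}\bigl( J_{X_{i}}^{\prime }(\xi _{i})-J_{X_{i}}^{\prime }(x)\bigr) (h_{i}),
\]
so that $|T(x+h)-T(x)-L_{x}(h)|\leq \sum_{i}\Vert J_{X_{i}}^{\prime }(\xi _{i})-J_{X_{i}}^{\prime }(x)\Vert \,\Vert h_{i}\Vert $. As $h\rightarrow \theta $ each $\xi _{i}\rightarrow x$, and by the assumed continuity of the partial derivatives each operator-norm factor tends to $0$; since $\Vert h_{i}\Vert \leq C\Vert h\Vert $ for a fixed constant $C$ (the coordinate projections being bounded), dividing by $\Vert h\Vert $ shows $T^{\prime }(x)=L_{x}$, so $T$ is Fr\'{e}chet differentiable. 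Finally $x\mapsto T^{\prime }(x)$ is a finite sum of the continuous maps $J_{X_{i}}^{\prime }$ composed with fixed projections, hence continuous, giving $T\in C^{1}(U,\mathbb{R})$. I expect the telescoping-plus-mean-value step to be the main obstacle: one must check that the intermediate points remain in $U$ and recognize that it is precisely the one-dimensionality of each $X_{i}$ that legitimizes invoking the scalar mean value theorem coordinate by coordinate.
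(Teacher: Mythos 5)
Your proof is correct: the restriction argument $J_{X_{i}}^{\prime }(x)=T^{\prime }(x)\circ \iota _{i}$ handles the forward implication, and the telescoping decomposition combined with the scalar mean value theorem on each one-dimensional coordinate (with the checks that the segments stay in $U$, that $\xi _{i}\rightarrow x$ as $h\rightarrow \theta $, and that the coordinate projections are bounded) correctly establishes the converse and the continuity of $x\mapsto T^{\prime }(x)$. Note that the paper itself offers no proof of this lemma --- it is quoted without proof from \cite{Maurin} --- so there is nothing to compare against; your argument is the standard textbook proof of this classical criterion and is presumably the one found in that reference.
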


We call $x_{0}$ a critical point of $T$ if and only if $\delta T(x_{0},h)=0$
for all $h\in X$. Functional $T$ satisfies the Palais-Smale condition (P.S.
condition for short) if any sequence $(x_{n}\in X:n\in 
\mathbb{N}
)$ for which $\{T(x_{n}):n\in 
\mathbb{N}
\}$ is bounded and $T^{\prime }(x_{n})\rightarrow \theta $ as $n\rightarrow
\infty $ possesses a convergent subsequence.

We recall theorems that will be used in research of multiplicity of
solution: the mountain pass theorem and Clark's theorem which will be
essential in proving our main result.

\begin{theorem}
\label{MPT}\cite{ajm}Let $T\in C^{1}(X,%
\mathbb{R}
)$ satisfy the P.S. condition. Assume that $T(\theta )=0$, $\Omega \subset X$
is an open set containing $\theta $, $x_{1}\notin \Omega $. If 
\begin{equation*}
\max \{T(\theta ),T(x_{1})\}<\inf_{x\in \partial \Omega }T(x)
\end{equation*}
then 
\begin{equation*}
c=\inf_{h\in \Gamma }\max_{t\in \lbrack 0,1]}J(h(t))
\end{equation*}
is the critical value of $J$, where%
\begin{equation*}
\Gamma =\{h:[0,1]\rightarrow E:h\text{ is continous, }h(0)=\theta
,h(1)=x_{1}\}\text{.}
\end{equation*}
\end{theorem}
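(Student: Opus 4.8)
The plan is to argue by contradiction using a deformation lemma, the standard route to the Mountain Pass Theorem; I read the functional denoted $J$ in the statement of $c$ as being $T$ itself. First I would verify that $c$ is well defined, finite, and sits strictly above the endpoint values. The class $\Gamma$ is nonempty: the straight segment $h_{0}(t)=(1-t)\theta +tx_{1}$ is a continuous path from $\theta$ to $x_{1}$, and since $T$ is continuous and $h_{0}([0,1])$ is compact, $\max_{t}T(h_{0}(t))<\infty$, so $c<\infty$. For the lower bound, set $\alpha =\inf_{x\in \partial \Omega }T(x)$. Because $\theta \in \Omega$ and $x_{1}\notin \Omega$, every $h\in \Gamma$ joins a point inside the open set $\Omega$ to a point outside it; by continuity of $T\circ h$ and connectedness of $[0,1]$ the path meets $\partial \Omega$, i.e. there is $t^{*}$ with $h(t^{*})\in \partial \Omega$, whence $\max_{t}T(h(t))\geq T(h(t^{*}))\geq \alpha$. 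Taking the infimum over $\Gamma$ gives $c\geq \alpha >\max \{T(\theta ),T(x_{1})\}$.

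Next, suppose for contradiction that $c$ is not a critical value, so the set $K_{c}=\{x\in X:T(x)=c,\ T^{\prime }(x)=\theta \}$ is empty. The heart of the argument is then a quantitative deformation lemma: using that $T\in C^{1}(X,\mathbb{R})$ and satisfies the P.S. condition, for a suitable $\varepsilon >0$ there is a continuous deformation $\eta :X\rightarrow X$, obtained by following a negative pseudo-gradient flow, such that $\eta$ leaves fixed every point $x$ with $T(x)\leq c-\varepsilon$, and pushes the sublevel set $\{T\leq c+\varepsilon \}$ into $\{T\leq c-\varepsilon \}$. Here $\varepsilon$ is chosen small enough that $c-\varepsilon >\max \{T(\theta ),T(x_{1})\}$, which guarantees $\eta (\theta )=\theta$ and $\eta (x_{1})=x_{1}$.

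Finally, by definition of $c$ as an infimum, I would pick $h\in \Gamma$ with $\max_{t}T(h(t))\leq c+\varepsilon$ and form the deformed path $\tilde{h}=\eta \circ h$. Since $\eta$ fixes the endpoints, $\tilde{h}\in \Gamma$; and since $h([0,1])\subset \{T\leq c+\varepsilon \}$ we get $\tilde{h}([0,1])\subset \{T\leq c-\varepsilon \}$, so $\max_{t}T(\tilde{h}(t))\leq c-\varepsilon <c$, contradicting the definition of $c$. Hence $K_{c}\neq \emptyset$ and $c$ is a critical value of $T$.

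The main obstacle is establishing the deformation lemma. The delicate part is constructing a locally Lipschitz pseudo-gradient vector field away from $K_{c}$ and solving the associated ODE to obtain $\eta$; the P.S. condition is exactly what forbids the flow from stalling at level $c$, preventing $\Vert T^{\prime }\Vert$ from decaying to zero along the flow without a genuine critical point existing, which is what makes the downward push by the fixed amount $\varepsilon$ possible. In the present finite-dimensional discrete setting the pseudo-gradient can be replaced by the ordinary gradient of $T$, so the flow is the classical negative-gradient flow and the required compactness is automatic, simplifying this step considerably.
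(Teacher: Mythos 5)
The paper offers no proof of this statement at all: it is the classical mountain pass theorem of Ambrosetti--Rabinowitz, quoted from the cited reference \cite{ajm} and used later as a black box (it is applied to $-J$ in the proof of Theorem \ref{MPT1}). So there is no internal proof to compare against; measured against the standard proof in the literature, your outline follows exactly that route and is essentially sound. The straight segment shows $\Gamma \neq \emptyset$ and $c<\infty$; the fact that every path in $\Gamma$ meets $\partial \Omega$ gives $c\geq \inf_{x\in \partial \Omega }T(x)>\max \{T(\theta ),T(x_{1})\}$ (note the crossing follows from connectedness of $[0,1]$ and continuity of $h$ itself, not of $T\circ h$ --- a small slip); and the quantitative deformation lemma applied to a near-optimal path yields the contradiction. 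Two caveats. First, the deformation you invoke, fixing the whole sublevel set $\{x:T(x)\leq c-\varepsilon \}$ pointwise, is a slightly nonstandard phrasing; the usual lemma only fixes the complement of a band $T^{-1}([c-2\varepsilon ,c+2\varepsilon ])$, which suffices here once one chooses $2\varepsilon <c-\max \{T(\theta ),T(x_{1})\}$ so that both endpoints lie below the band. Second, and the only substantive error: your closing remark that in the finite-dimensional setting the pseudo-gradient can be replaced by the ordinary negative gradient flow is not justified for a functional that is merely $C^{1}$. In that case $\nabla T$ is only continuous, so the gradient ODE may fail uniqueness and continuous dependence on initial data, which the construction of the deformation $\eta$ requires; the locally Lipschitz pseudo-gradient field is needed because of the regularity of $T$, not the dimension of $X$ (and the theorem as stated lives on a general Banach space anyway). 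Since this remark is peripheral --- your main argument goes through the pseudo-gradient construction --- it does not invalidate the proof, but it should be corrected or deleted.
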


\begin{theorem}
\label{ClarkTheo}\cite{rabinowirtz} Let $X$ be a real Banach space and let $%
T\in C^{1}(X,%
\mathbb{R}
)$ be even, bounded from below and satisfying the P.S condition. Suppose $%
T(\theta )=0$ and that there exist a set $K\subset X$ such that $K$ is
homeomorphic to $S^{n-1}$ ($(n-1)$-dimensional sphere) by an odd map, and $%
\sup K<0$. Then $T$ posseses at least $n$ distinct pairs of critical points.
\end{theorem}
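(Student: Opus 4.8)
The plan is to prove Clark's theorem by the classical symmetric minimax method built on the Krasnoselskii genus, exploiting the $\mathbb{Z}_2$-action coming from the evenness of $T$. First I would recall the genus $\gamma(A)$ of a closed symmetric set $A\subset X\setminus\{\theta\}$ (symmetric meaning $-A=A$), defined as the least integer $m$ for which there is an odd continuous map $A\to\mathbb{R}^m\setminus\{\theta\}$, together with its standard properties: monotonicity under odd continuous maps, the normalization $\gamma(S^{n-1})=n$, subadditivity $\gamma(A\cup B)\le\gamma(A)+\gamma(B)$, and the existence, for compact symmetric $A$, of a symmetric neighborhood of the same genus.

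Next I would introduce the minimax levels
\[
c_j=\inf_{A\in\Gamma_j}\ \sup_{x\in A}T(x),\qquad \Gamma_j=\{A\subset X : A\text{ closed, symmetric, }\gamma(A)\ge j\},
\]
for $j=1,\dots,n$, so that $c_1\le c_2\le\dots\le c_n$. Since $T$ is bounded below, each $c_j>-\infty$. To bound them above I would use the hypothesis: if $h:S^{n-1}\to K$ is the odd homeomorphism, then $K$ is compact and symmetric, and $\gamma(K)=\gamma(S^{n-1})=n$ by applying monotonicity to $h$ and $h^{-1}$; moreover $\theta\notin K$ because $T(\theta)=0>\sup_K T$, so the genus is defined. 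Hence $K\in\Gamma_j$ for every $j\le n$, giving $c_j\le\sup_{x\in K}T(x)<0$, and all the levels are finite and strictly negative.

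The heart of the argument is to show that every $c_j$ is a critical value and to extract multiplicity when levels coincide. For this I would invoke the equivariant deformation lemma: using the P.S. condition and the $C^1$ regularity one constructs an odd pseudo-gradient flow; if $c=c_j$ were a regular value it would push a sublevel set $T^{c+\varepsilon}$ into $T^{c-\varepsilon}$ through an odd map $\eta$, whence any admissible $A\subset T^{c+\varepsilon}$ yields $\eta(A)\subset T^{c-\varepsilon}$ with $\gamma(\eta(A))\ge\gamma(A)\ge j$, contradicting $c=c_j=\inf$. The same scheme, combined with the neighborhood and subadditivity properties, treats repeated levels: if $c=c_j=\dots=c_{j+p}$ and one supposes $\gamma(K_c)\le p$ for the symmetric critical set $K_c=\{x : T(x)=c,\ T'(x)=\theta\}$, then deforming $T^{c+\varepsilon}$ away from a genus-$\le p$ neighborhood of $K_c$ produces an admissible set of genus $\ge j$ below level $c$, again a contradiction; hence $\gamma(K_c)\ge p+1$. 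Since $c<0=T(\theta)$ we have $\theta\notin K_c$, and a symmetric set of genus $\ge 2$ is infinite.

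Finally I would assemble the count. The values $c_1\le\dots\le c_n$ are $n$ critical values lying strictly below $0$. If they are all distinct they already furnish $n$ distinct critical points; when some coincide, the estimate $\gamma(K_c)\ge p+1$ produces enough critical points at the common level that the total number of distinct pairs is still at least $n$. Because $T$ is even, each critical point $x\ne\theta$ is accompanied by its antipode $-x\ne x$, which is also critical, so the critical points genuinely occur in pairs $\{x,-x\}$, none of them equal to $\theta$ since their critical value is negative. This yields at least $n$ distinct pairs of critical points, as claimed. I expect the main obstacle to be the equivariant deformation lemma and the genus bound $\gamma(K_c)\ge p+1$ for degenerate levels, where the P.S. condition, the oddness of the flow, and the fine properties of the genus must all be combined with care.
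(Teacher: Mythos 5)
The paper offers no proof of this statement at all: it is Clark's theorem, quoted verbatim with a citation to \cite{rabinowirtz}, and used later as a black box. Your outline is correct and is precisely the standard proof found in that cited source (Theorem 9.1 of Rabinowitz's book): the Krasnoselskii genus with its monotonicity, subadditivity and neighborhood properties, the minimax levels $c_j$ over closed symmetric sets of genus at least $j$, the bound $-\infty<c_j\le\sup_K T<0$ obtained from $\gamma(K)=\gamma(S^{n-1})=n$, the odd pseudo-gradient deformation argument showing each $c_j$ is critical, and the estimate $\gamma(K_c)\ge p+1$ handling repeated levels, with evenness and $c_j<0=T(\theta)$ guaranteeing the critical points occur in nonzero antipodal pairs. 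The only parts you defer --- the equivariant deformation lemma and the fine genus properties --- are standard and legitimately citable, so the sketch stands as a faithful reconstruction of the classical argument the paper itself relies on by reference.
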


\section{Main results}

Solutions to (\ref{BVP}) are obtained in space 
\begin{equation*}
E=\{y:Z[-1,N+2]\rightarrow 
\mathbb{R}
|y(-1)=y(0)=y(N+1)=y(N+2)=0\}
\end{equation*}%
\newline
considered with a norm 
\begin{equation*}
||y||=\sqrt{\sum\limits_{k=1}^{N}y(k)^{2}}
\end{equation*}

\begin{lemma}
\label{MainLemma} For all $y\in E$ let 
\begin{equation*}
J(y)=\sum\limits_{k=1}^{N+2}\frac{p(k)}{2}(\Delta
^{2}y(k-2))^{2}-\sum\limits_{k=1}^{N+1}\frac{q(k)}{2}(\Delta
y(k-1))^{2}+\sum\limits_{k=1}^{N}F(k,y(k))
\end{equation*}%
Where $F(k,s)=\int_{0}^{s}f(k,t)dt$. Then $J\in C^{1}(E,%
\mathbb{R}
)$. Point $y_{0}$ is a solution to (\ref{BVP}) if and only if it is a
critical point of $J$.
\end{lemma}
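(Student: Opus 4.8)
The plan is to exploit that $E$ is finite dimensional, being isomorphic to $\mathbb{R}^N$ via $y\mapsto(y(1),\dots,y(N))$ since the boundary conditions pin the remaining values to zero; accordingly the one-dimensional factors $X_i$ in Lemma \ref{Pczastk} are the coordinate lines corresponding to $y(i)$. For the $C^1$ claim I would argue term by term. The first two sums in $J$ are quadratic polynomials in the coordinates $y(1),\dots,y(N)$, because both $\Delta^2 y(k-2)$ and $\Delta y(k-1)$ are linear combinations of these coordinates; hence they are smooth. For the third sum, the continuity of $f(k,\cdot)$ together with the fundamental theorem of calculus gives that $s\mapsto F(k,s)=\int_0^s f(k,t)\,dt$ is continuously differentiable with $\partial_s F(k,s)=f(k,s)$. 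Consequently each partial derivative $\partial J/\partial y(i)$ exists and is continuous, and Lemma \ref{Pczastk} yields $J\in C^1(E,\mathbb{R})$.

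For the variational characterization I would compute the Gateaux derivative directly. Differentiating under the (finite) sums gives, for every $h\in E$,
\[
\delta J(y;h)=\sum_{k=1}^{N+2}p(k)(\Delta^2 y(k-2))(\Delta^2 h(k-2))-\sum_{k=1}^{N+1}q(k)(\Delta y(k-1))(\Delta h(k-1))+\sum_{k=1}^{N}f(k,y(k))\,h(k).
\]
The heart of the argument is to shift the difference operators off $h$ and onto the coefficient data by discrete summation by parts in the form $\sum_{k=a}^{b}f(k+1)\,\Delta g(k)=f(b+1)g(b+1)-f(a)g(a)-\sum_{k=a}^{b}g(k)\,\Delta f(k)$. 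I would apply it twice to the $p$-sum and once to the $q$-sum. The crucial point is that every boundary term produced is a product containing one of $h(-1),h(0),h(N+1),h(N+2)$ or one of $\Delta h(-1),\Delta h(N+1)$, all of which vanish because $h\in E$ satisfies the same homogeneous boundary conditions as $y$; in particular the undefined values $p(0),q(0)$ never actually appear, as they are multiplied by vanishing factors.

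After this rearrangement the derivative collapses to
\[
\delta J(y;h)=\sum_{k=1}^{N}\Bigl[\Delta^2\bigl(p(k)\Delta^2 y(k-2)\bigr)+\Delta\bigl(q(k)\Delta y(k-1)\bigr)+f(k,y(k))\Bigr]h(k).
\]
Finally I would conclude by testing with the coordinate vectors: choosing $h=e_i$ for $i=1,\dots,N$ shows that $\delta J(y;h)=0$ for all $h\in E$ if and only if the bracketed expression vanishes for every $k\in Z[1,N]$, which is precisely the difference equation in \eqref{BVP}. Since every $y\in E$ already satisfies the boundary conditions, $y$ is a critical point of $J$ exactly when it solves the BVP. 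I expect the summation-by-parts bookkeeping for the fourth-order term---keeping the index shifts and signs consistent across two applications while verifying that each boundary contribution is annihilated by the boundary conditions---to be the main obstacle, whereas the $C^1$ part is routine once finite-dimensionality is invoked.
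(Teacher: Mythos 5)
Your proposal is correct and takes essentially the same route as the paper: compute the Gateaux derivative, move the difference operators off $h$ by summation by parts (all boundary terms being killed by the homogeneous boundary conditions) to reach the form $\sum_{k=1}^{N}[\Delta^{2}(p(k)\Delta^{2}y(k-2))+\Delta(q(k)\Delta y(k-1))+f(k,y(k))]h(k)$, and invoke Lemma \ref{Pczastk} on continuity of the partial derivatives for the $C^{1}$ claim. The only cosmetic difference is the final step: you test with the coordinate vectors $e_{i}$, while the paper inserts the residual itself as the test direction to get a vanishing sum of squares; the two are equivalent.
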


\begin{proof}
We denote by $\varphi :%
\mathbb{R}
\rightarrow 
\mathbb{R}
$ the function $\varphi (\varepsilon )=J(y+\varepsilon h)$ for $y,h\in E$
and $\varepsilon \in 
\mathbb{R}
$. Then%
\begin{equation*}
\begin{array}{l}
\varphi (\varepsilon )=\sum\limits_{k=1}^{N+2}\frac{p(k)}{2}(\Delta
^{2}(y+\varepsilon h)(k-2))^{2}- \\ 
\sum\limits_{k=1}^{N+1}\frac{q(k)}{2}(\Delta (y+\varepsilon
h)(k-1))^{2}+\sum\limits_{k=1}^{N}F(k,(y+\varepsilon h)(k))%
\end{array}%
\end{equation*}%
$\varphi (\varepsilon )$ is differentable therefore $J$ is differentable in
the sense of Gateaux. Moreover, we have%
\begin{equation*}
\begin{array}{l}
\delta J(y;h){\small =\sum\limits_{k=1}^{N+2}p(k)\Delta }^{2}{\small %
y(k-2)\Delta }^{2}{\small h(k-2)-} \\ 
{\small \sum\limits_{k=1}^{N+1}q(k)\Delta y(k-1)\Delta
h(k-1)+\sum\limits_{k=1}^{N}f(k,y(k))h(k)=} \\ 
\sum\limits_{k=1}^{N+2}{\small p(k)\Delta }^{2}{\small y(k-2)\Delta }^{2}%
{\small h(k-2)-\sum\limits_{k=1}^{N+1}q(k)\Delta y(k-1)\Delta
h(k-1)+\sum\limits_{k=1}^{N}f(k,y(k))h(k)=} \\ 
\sum\limits_{k=1}^{N}[{\small \Delta }^{2}({\small p(k)\Delta }^{2}{\small %
y(k-2))}+{\small \Delta (q(k)y(k-1))+f(k,y(k))]h(k)}%
\end{array}%
\end{equation*}%
Since $h$ is arbitrary fixed it follows that $y_{0}\in E$ is a critical
point of $J$ if and only if $y_{0}$ satisfies BVP (\ref{BVP}). To proof that
assume first that $y_{0}$ is a critical point of $J$, i.e. $\delta
J(y_{0};h)=0$ for all $h\in E\backslash \{0\}$ and put 
\begin{equation*}
{\small h(k)=\Delta }^{2}({\small p(k)\Delta }^{2}y_{0}{\small (k-2))}+%
{\small \Delta (q(k)y_{0}(k-1))+f(k,y_{0}(k))}\text{ for }k=1,..,N\text{.}
\end{equation*}%
Then $\delta J(y_{0};h)=\sum\limits_{k=1}^{N}[{\small \Delta }^{2}({\small %
p(k)\Delta }^{2}y_{0}{\small (k-2))}+{\small \Delta
(q(k)y_{0}(k-1))+f(k,y_{0}(k))]}^{2}=0$ and therefore 
\begin{equation*}
{\small \Delta }^{2}({\small p(k)\Delta }^{2}y_{0}{\small (k-2))}+{\small %
\Delta (q(k)y_{0}(k-1))+f(k,y_{0}(k))=0}\text{ for }k=1,..,N\text{.}
\end{equation*}%
On the other side if $y_{0}$ is solution to the above system then $\delta
J(y_{0};h)=0$ for every $h\in E\backslash \{0\}$.\newline
Since $f$ is continous on its second variable it follows that $\delta J(y,h)$
is continous. It holds that $J_{X_{i}}^{\prime }(y)=\delta J(y,e_{i})$,
where $X_{i}=\{y\in E:y(k)=0$ for $k\neq i\}$ and $e_{i}\in X_{i}$ are unit
vectors for $i=1,..,N$. Thus all partial derivates are continous. Then from
lemma(\ref{Pczastk}) it follows that $J\in C^{1}(E,%
\mathbb{R}
)$.
\end{proof}

\begin{lemma}
\label{nier4} For all $y\in E$ it holds that 
\begin{equation*}
\sum\limits_{k=1}^{N+1}(\Delta y(k-1))^{2}\leq 4||y||^{2}\text{ and }%
\sum\limits_{k=1}^{N+2}(\Delta ^{2}y(k-2))^{2}\leq 16||y||^{2}
\end{equation*}
\end{lemma}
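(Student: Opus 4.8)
The plan is to prove both inequalities by the same elementary device: expand the squared difference, use the boundary conditions to control the endpoint terms, and then bound cross terms via the elementary inequality $2ab \le a^2 + b^2$. The key observation is that the first-difference sum telescopes against the $\ell^2$ norm once we account for the fact that $y$ vanishes at the boundary, namely $y(0)=y(N+1)=0$ for the first estimate and additionally $y(-1)=y(N+2)=0$ for the second.

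\medskip

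First I would treat the first-order estimate. Writing
\begin{equation*}
\sum_{k=1}^{N+1}(\Delta y(k-1))^{2}=\sum_{k=1}^{N+1}\bigl(y(k)-y(k-1)\bigr)^{2},
\end{equation*}
I would expand each square as $y(k)^{2}-2y(k)y(k-1)+y(k-1)^{2}$ and split the sum into three pieces. The boundary conditions $y(0)=y(N+1)=0$ let me identify $\sum_{k=1}^{N+1}y(k)^{2}=\sum_{k=1}^{N}y(k)^{2}=\|y\|^{2}$ and likewise $\sum_{k=1}^{N+1}y(k-1)^{2}=\|y\|^{2}$, so these two pieces already contribute $2\|y\|^{2}$. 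For the cross term I would apply $2\,|y(k)y(k-1)|\le y(k)^{2}+y(k-1)^{2}$, whose total is bounded by $2\|y\|^{2}$, giving the constant $4$.

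\medskip

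Next I would handle the second-order estimate by the same method applied one level up: either directly expand $(\Delta^{2}y(k-2))^{2}=(y(k)-2y(k-1)+y(k-2))^{2}$ and bound the six resulting terms using $2ab\le a^2+b^2$ together with the vanishing of $y$ at all four boundary indices, or else observe that $\Delta^{2}y=\Delta(\Delta y)$ and reuse the first inequality structurally. The cleaner route is the direct expansion, since the boundary conditions $y(-1)=y(0)=y(N+1)=y(N+2)=0$ ensure every shifted sum $\sum y(k-j)^2$ collapses back to $\|y\|^2$; the three diagonal terms contribute $(1+4+1)\|y\|^2=6\|y\|^2$ and the three cross terms, each bounded by a multiple of $\|y\|^2$ via $2ab\le a^2+b^2$, supply the remaining $10\|y\|^2$ needed to reach the constant $16$.

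\medskip

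I expect the only delicate point to be bookkeeping of the index ranges so that the boundary conditions are invoked correctly; in particular one must check that the shifted summation indices stay within $Z[-1,N+2]$ and that each shift of the $\ell^2$ sum really does reduce to $\|y\|^{2}$ rather than picking up a spurious boundary contribution. Once the index shifts are pinned down, the rest is the routine application of $2ab\le a^{2}+b^{2}$ and carries no genuine obstacle.
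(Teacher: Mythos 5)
Your proposal is correct, and it splits from the paper halfway through. For the first inequality you do exactly what the paper does: expand $(y(k)-y(k-1))^{2}$, use $y(0)=y(N+1)=0$ to collapse the two shifted diagonal sums to $||y||^{2}$ each, and bound the cross term via $-2ab\leq a^{2}+b^{2}$, giving the constant $4$. For the second inequality you choose the direct expansion of $(y(k)-2y(k-1)+y(k-2))^{2}$ into six terms, with the diagonal contributing $(1+4+1)||y||^{2}$ and the cross terms $(4+4+2)||y||^{2}$, so $16=6+10$; the paper instead takes the structural route you mention only as an alternative: it writes $\Delta ^{2}y(k-2)=\Delta y(k-1)-\Delta y(k-2)$, applies $(a-b)^{2}\leq 2a^{2}+2b^{2}$, uses that $\Delta y(-1)=\Delta y(N+1)=0$ to identify the two shifted sums, and concludes $\sum_{k=1}^{N+2}(\Delta ^{2}y(k-2))^{2}\leq 4\sum_{k=1}^{N+1}(\Delta y(k-1))^{2}\leq 16||y||^{2}$, so the constant appears as $4\cdot 4$. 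The paper's iteration is shorter and reuses the first estimate verbatim, while your expansion is self-contained but costs more bookkeeping: three shifted diagonal sums and three cross terms, each of which needs all four boundary conditions $y(-1)=y(0)=y(N+1)=y(N+2)=0$ to collapse to $||y||^{2}$. Your arithmetic checks out ($6+4+4+2=16$), so both arguments are complete and give the same constants.
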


\begin{proof}
Since $-2ab\leq a^{2}+b^{2}$ for all $a,b\in 
\mathbb{R}
$ we have 
\begin{equation*}
\begin{array}{l}
\sum\limits_{k=1}^{N+1}{\small (\Delta y(k-1))}^{2}{\small =}%
\sum\limits_{k=1}^{N+1}{\small (y(k)-y(k-1))}^{2}{\small =} \\ 
\sum\limits_{k=1}^{N+1}{\small (y(k)}^{2}{\small -2y(k)y(k-1)+y(k-1)}^{2}%
{\small )\leq }\sum\limits_{k=1}^{N+1}{\small (2y(k)}^{2}{\small +2y(k-1)}%
^{2}{\small )=}\sum\limits_{k=1}^{N}{\small 2y(k)}^{2}{\small %
+\sum\limits_{k=1}^{N}2y(k)}^{2}\leq 4||y||^{2}%
\end{array}%
\end{equation*}%
and 
\begin{equation*}
\begin{array}{l}
\sum\limits_{k=1}^{N+2}{\small (\Delta }^{2}{\small y(k-2))}^{2}{\small =}%
\sum\limits_{k=1}^{N+2}{\small (\Delta y(k-1)-\Delta y(k-2))}^{2} \\ 
{\small \leq 4\sum\limits_{k=1}^{N+1}(\Delta y(k-1))^{2}\leq 16||y||^{2}.}%
\end{array}%
\end{equation*}%
\newline
\end{proof}

\begin{lemma}
\label{lemmamat} Let $A_{m\times m}$ be a symmetric and positive-defined
real matrix and let $B_{m\times n}$ be a real matrix \ Then $B^{T}AB$ is
positive defined if and only if $Rank(B)=n$

\begin{proof}
If $B^{T}AB$ is positive defined for all $x\in 
\mathbb{R}
^{n}\backslash \{\theta \}$ we have 
\begin{equation*}
(Bx)^{T}A(Bx)=x^{T}B^{T}ABx>0,
\end{equation*}%
hence $Bx\neq \theta $, and therefore $Rank(B)=n$. Assume that $Rank(B)=n$.
Then for all $x\in 
\mathbb{R}
^{n}\backslash \{\theta \}$ it holds that $Bx\neq \theta $ and .$%
(Bx)^{T}A(Bx)>0$ since $A$ is positive defined, hence $B^{T}AB$ is positive
defined.
\end{proof}
\end{lemma}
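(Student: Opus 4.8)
The plan is to reduce both implications to a single observation. Since $A$ is symmetric and positive defined, for every $x\in\mathbb{R}^{n}$ the quadratic form attached to $B^{T}AB$ satisfies $x^{T}B^{T}ABx=(Bx)^{T}A(Bx)\geq 0$, with equality exactly when $Bx=\theta$. Thus $B^{T}AB$, which is visibly symmetric, is automatically positive semidefinite, and the whole lemma reduces to controlling when the value $0$ is attained at a nonzero $x$ --- equivalently, when $B$ fails to be injective as a map $\mathbb{R}^{n}\rightarrow\mathbb{R}^{m}$.

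First I would treat the implication $Rank(B)=n\Rightarrow B^{T}AB$ positive defined. Full column rank means that the only solution of $Bx=\theta$ is $x=\theta$, so for any $x\neq\theta$ we have $Bx\neq\theta$; positive definiteness of $A$ then forces $(Bx)^{T}A(Bx)>0$, giving strict positivity of the form and hence positive definiteness of $B^{T}AB$.

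For the converse I would argue directly. If $B^{T}AB$ is positive defined, then $(Bx)^{T}A(Bx)=x^{T}B^{T}ABx>0$ for all $x\neq\theta$; in particular this value is nonzero, which by the equality case recorded above rules out $Bx=\theta$. Hence $Bx=\theta$ implies $x=\theta$, the null space of $B$ is trivial, and therefore $Rank(B)=n$.

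There is no real obstacle here: the single nontrivial ingredient is the strict inequality $(Bx)^{T}A(Bx)>0$ whenever $Bx\neq\theta$, which is precisely the hypothesis that $A$ itself --- and not merely $B^{T}AB$ --- is positive defined. Every other step is just the standard equivalence between full column rank of $B$ and triviality of its kernel.
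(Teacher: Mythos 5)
Your proposal is correct and follows essentially the same route as the paper: both directions rest on the identity $x^{T}B^{T}ABx=(Bx)^{T}A(Bx)$, the equivalence of $Rank(B)=n$ with triviality of the kernel of $B$, and the fact that positive definiteness of $A$ makes this quantity strictly positive exactly when $Bx\neq \theta$. The only cosmetic difference is that you first record the positive semidefiniteness of $B^{T}AB$ and phrase everything through the equality case, which the paper leaves implicit.
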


Define $v(k)=\Delta y(k)=y(k+1)-y(k)$ for $k\in Z[0,N]$. Then $v=V\widetilde{%
y}$, where 
\begin{equation*}
v=[v(0),v(1),...,v(N)]^{T},\widetilde{y}=[y(1),..,y(N)]^{T},
\end{equation*}%
and 
\begin{equation*}
{\small V=}\left[ 
\begin{array}{ccccc}
1 &  &  &  &  \\ 
-1 & 1 &  &  &  \\ 
& -1 & \ddots  &  &  \\ 
&  & \ddots  &  &  \\ 
&  &  &  & 1 \\ 
&  &  &  & -1%
\end{array}%
\right] _{(N+1)\times N}
\end{equation*}%
Note that 
\begin{equation*}
\sum\limits_{k=1}^{N+1}(\Delta y(k-1))^{2}=v^{T}v=(V\widetilde{y})^{T}(V%
\widetilde{y})=\widetilde{y}^{T}V^{T}V\widetilde{y}
\end{equation*}%
By lemma (\ref{lemmamat}) with $A=I_{(N+1)\times (N+1)}$ it follows that $%
V^{T}V$ is positive-defined. Therefore all eigenvalues of\ $V^{T}V$ are real
and positive. Denote henceforth by $\lambda _{1}$ the smallest eigenvalue of 
$V^{T}V$. Then it follows that for all $y\in E$. 
\begin{equation*}
\sum\limits_{k=1}^{N+1}(\Delta y(k-1))^{2}=\widetilde{y}^{T}V^{T}V%
\widetilde{y}\geq \lambda _{1}\widetilde{y}^{T}\widetilde{y}=\lambda
_{1}||y||^{2}
\end{equation*}%
In the same way we put $w(k)=\Delta ^{2}y(k)$ for $k\in Z[-1,N]$. Then $w=W%
\widetilde{y}$, where $w=[w(-1),w(0),...,w(N)]^{T}$ and 
\begin{equation*}
{\small W=}\left[ 
\begin{array}{ccccc}
1 &  &  &  &  \\ 
-2 & 1 &  &  &  \\ 
1 & -2 & \ddots  &  &  \\ 
& 1 & \ddots  & 1 &  \\ 
&  & \ddots  & -2 & 1 \\ 
&  &  & 1 & -2 \\ 
&  &  &  & 1%
\end{array}%
\right] _{(N+2)\times N}
\end{equation*}%
Likewise in previous case by lemma (\ref{lemmamat}) $W^{T}W$\ is positive
defined.\ Denote henceforth by $\lambda _{2}$ the smallest eigenvalue of $%
W^{T}W$\ \ Then for all $y\in E$ \ it holds that%
\begin{equation*}
\sum\limits_{k=1}^{N+2}(\Delta ^{2}y(k-2))^{2}=\widetilde{y}^{T}W^{T}W%
\widetilde{y}\geq \lambda _{2}||y||^{2}
\end{equation*}%
So we have proven the following lemma:

\begin{lemma}
\label{lemmalamb}For all $y\in E$ it follows that \ 
\begin{equation*}
\sum\limits_{k=1}^{N+1}(\Delta y(k-1))^{2}\geq \lambda _{1}||y||^{2}\text{
and }\sum\limits_{k=1}^{N+2}(\Delta ^{2}y(k-2))^{2}\geq \lambda
_{2}|||y||^{2}
\end{equation*}
\end{lemma}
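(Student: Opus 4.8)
The plan is to mimic exactly the construction already carried out for the first-order difference operator, transferring it to the second-order case. The key observation is that the quantity $\sum_{k=1}^{N+2}(\Delta^{2}y(k-2))^{2}$ has already been rewritten as a quadratic form $\widetilde{y}^{T}W^{T}W\widetilde{y}$ by introducing the vector $w=W\widetilde{y}$ whose entries are the second differences $w(k)=\Delta^{2}y(k)$ for $k\in Z[-1,N]$, where the boundary conditions $y(-1)=y(0)=y(N+1)=y(N+2)=0$ guarantee that all of these second differences are expressible through the interior values $y(1),\dots,y(N)$ alone, which is precisely why $W$ is an $(N+2)\times N$ matrix acting on $\widetilde{y}$.

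First I would invoke Lemma \ref{lemmamat} with $A=I_{(N+2)\times(N+2)}$ and $B=W$ to conclude that $W^{T}W$ is positive-defined, which requires only checking that $\mathrm{Rank}(W)=N$; this is immediate since the top $N\times N$ block of $W$ is lower-triangular with ones on the diagonal (indeed the first row is $(1,0,\dots,0)$, and the subdiagonal pattern makes the leading principal $N\times N$ minor equal to $1$), so the $N$ columns are linearly independent. Next, because $W^{T}W$ is symmetric and positive-defined, all its eigenvalues are real and strictly positive, so its smallest eigenvalue $\lambda_{2}$ is positive and the standard Rayleigh-quotient estimate gives $\widetilde{y}^{T}W^{T}W\widetilde{y}\geq \lambda_{2}\,\widetilde{y}^{T}\widetilde{y}=\lambda_{2}\|y\|^{2}$ for every $\widetilde{y}\in\mathbb{R}^{N}$. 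The identical argument applied to $V$ and $\lambda_{1}$ yields the first inequality.

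I expect the main obstacle to be the bookkeeping that establishes the identity $\sum_{k=1}^{N+2}(\Delta^{2}y(k-2))^{2}=\widetilde{y}^{T}W^{T}W\widetilde{y}$, that is, verifying that the matrix $W$ as written genuinely reproduces the values $\Delta^{2}y(k)$ once the four boundary zeros are substituted. Concretely one must check the edge rows: $w(-1)=\Delta^{2}y(-1)=y(1)-2y(0)+y(-1)=y(1)$ using $y(-1)=y(0)=0$, and symmetrically at the top end $w(N)=y(N+2)-2y(N+1)+y(N)=y(N)$, which is what forces the first and last rows of $W$ to be the unit rows $(1,0,\dots,0)$ and $(0,\dots,0,1)$. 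Once this indexing is confirmed the rest is routine linear algebra. Since both inequalities have now been derived in the running text preceding the statement, the proof reduces to assembling these two displays, and I would simply write:

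\begin{proof}
Both inequalities were established in the discussion preceding the statement. Applying Lemma \ref{lemmamat} with $A=I$ shows that $V^{T}V$ and $W^{T}W$ are positive-defined, since $\mathrm{Rank}(V)=\mathrm{Rank}(W)=N$; consequently their smallest eigenvalues $\lambda_{1}$ and $\lambda_{2}$ are positive, and the Rayleigh estimates
\begin{equation*}
\sum\limits_{k=1}^{N+1}(\Delta y(k-1))^{2}=\widetilde{y}^{T}V^{T}V\widetilde{y}\geq \lambda_{1}\|y\|^{2},\qquad \sum\limits_{k=1}^{N+2}(\Delta^{2}y(k-2))^{2}=\widetilde{y}^{T}W^{T}W\widetilde{y}\geq \lambda_{2}\|y\|^{2}
\end{equation*}
follow immediately, using that $\widetilde{y}^{T}\widetilde{y}=\|y\|^{2}$.
\end{proof}
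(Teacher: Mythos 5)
Your proposal is correct and follows exactly the paper's own route: the lemma is just the summary of the preceding discussion, where $V^{T}V$ and $W^{T}W$ are shown positive-defined via Lemma \ref{lemmamat} with $A=I$ and the Rayleigh-quotient bound with the smallest eigenvalues $\lambda_{1},\lambda_{2}$ gives both inequalities. In fact you supply two details the paper leaves implicit, namely the explicit verification that $\mathrm{Rank}(V)=\mathrm{Rank}(W)=N$ (via the triangular leading $N\times N$ blocks) and the check that the boundary conditions make the edge rows of $W$ the unit rows, so your write-up is if anything slightly more complete.
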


Define $p_{\min }=\min_{k\in Z[1,N+2]}\{p(k)\}$, $p_{\max }=\max_{k\in
Z[1,N+2]}\{p(k)\}$ and $q_{\min }$, $q_{\max }$ in the same manner. and let
for $p:Z[1,N+2]\rightarrow 
\mathbb{R}
$ and $q:Z[1,N+1]\rightarrow 
\mathbb{R}
$: 
\begin{equation*}
\eta ^{\prime }(p)=\left\{ 
\begin{array}{cc}
\lambda _{2} & \text{if }p_{\min }\geq 0 \\ 
16 & \text{if }p_{\min }<0%
\end{array}%
\right. \text{,}
\end{equation*}%
\begin{equation*}
\eta (q)=\left\{ 
\begin{array}{cc}
\lambda _{1} & \text{if }q_{\max }<0 \\ 
4 & \text{if }q_{\max }\geq 0%
\end{array}%
\right. \text{.}
\end{equation*}

\begin{theorem}
\label{1sol} Assume that\newline
1) there exist $m>0$ such that $sf(k,s)\geq 0$ for $|s|\geq m$ and $k\in
Z[1,N]$ \newline
2) $\eta ^{\prime }(p)p_{\min }-\eta (q)q_{\max }>0$\newline
Than $J$ coercive and BVP\ (\ref{BVP}) has at least one solution. Moreover
if there exist $k_{0}\in Z[1,N]$ such that $f(k_{0},0)\neq 0$ the solution
is non-zero.
\end{theorem}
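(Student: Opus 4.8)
The plan is to establish coercivity of $J$ first, since coercivity plus the $C^1$ property (Lemma \ref{MainLemma}) and finite-dimensionality of $E$ will hand us a global minimizer, which by Lemma \ref{MainLemma} is a critical point and hence a solution. To bound $J$ from below I would split the functional into its three sums and control each. The first sum $\sum_{k=1}^{N+2}\frac{p(k)}{2}(\Delta^2 y(k-2))^2$ is bounded below using $p(k)\geq p_{\min}$ together with Lemma \ref{lemmalamb} when $p_{\min}\geq 0$ (giving a factor $\lambda_2$) or Lemma \ref{nier4} when $p_{\min}<0$ (giving a factor $16$ with the opposite sign); this is precisely why $\eta'(p)$ is defined by cases. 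Symmetrically, for the middle term $-\sum_{k=1}^{N+1}\frac{q(k)}{2}(\Delta y(k-1))^2$ I would use $q(k)\leq q_{\max}$ and apply Lemma \ref{lemmalamb} or Lemma \ref{nier4} according to the sign of $q_{\max}$, which explains the case definition of $\eta(q)$.

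Combining these, the quadratic part of $J$ is bounded below by $\tfrac{1}{2}\bigl(\eta'(p)p_{\min}-\eta(q)q_{\max}\bigr)\|y\|^2$, and hypothesis 2) guarantees this coefficient is strictly positive. It remains to handle the nonlinear term $\sum_{k=1}^{N}F(k,y(k))$. Here hypothesis 1) is the crucial ingredient: the sign condition $sf(k,s)\geq 0$ for $|s|\geq m$ forces $F(k,s)$ to be nondecreasing in $|s|$ once $|s|\geq m$, so $F(k,y(k))$ is bounded below by a constant depending only on the behavior of $F$ on the compact interval $[-m,m]$ (where $f$, being continuous, is bounded). Thus $\sum_{k=1}^N F(k,y(k))\geq -C$ for some constant $C$ independent of $y$. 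Putting the pieces together gives $J(y)\geq \tfrac{1}{2}(\eta'(p)p_{\min}-\eta(q)q_{\max})\|y\|^2 - C$, so $J(y)\to\infty$ as $\|y\|\to\infty$, establishing coercivity.

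For existence of a solution, I would note that a coercive, continuous functional on the finite-dimensional space $E$ attains its infimum: the sublevel sets are bounded hence compact, so a minimizer $y_0$ exists. By Lemma \ref{MainLemma} the minimizer satisfies $\delta J(y_0;h)=0$ for all $h$, hence is a solution of BVP (\ref{BVP}). Finally, for the non-triviality claim, I would argue by contradiction: if the solution were the zero function, then substituting $y\equiv 0$ into the difference equation at $k=k_0$ would force $\Delta^2(p(k_0)\Delta^2 0)+\Delta(q(k_0)\cdot 0)+f(k_0,0)=0$, i.e. $f(k_0,0)=0$, contradicting the assumption $f(k_0,0)\neq 0$; so any solution, in particular the minimizer, must be non-zero.

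The main obstacle I anticipate is the careful sign-bookkeeping in the quadratic estimates: one must apply the correct inequality (the lower bound from Lemma \ref{lemmalamb} versus the upper bound from Lemma \ref{nier4}) to each term depending on the sign of $p_{\min}$ and of $q_{\max}$, making sure the inequalities point the right way so that the combined coefficient genuinely matches $\eta'(p)p_{\min}-\eta(q)q_{\max}$. A subtlety worth checking is that when $p_{\min}<0$ one needs an \emph{upper} bound on $\sum(\Delta^2 y(k-2))^2$ to bound the (now negative) contribution from below, which is exactly what the constant $16$ in Lemma \ref{nier4} supplies; the analogous remark applies to the $q$ term. Once these signs are aligned the rest is routine.
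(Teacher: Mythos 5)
Your proposal is correct and follows essentially the same route as the paper: bound the quadratic part below by $\tfrac{1}{2}\bigl(\eta'(p)p_{\min}-\eta(q)q_{\max}\bigr)\|y\|^{2}$ using Lemma \ref{nier4} or Lemma \ref{lemmalamb} according to the signs (exactly the case distinction built into $\eta'$ and $\eta$), bound $\sum_{k}F(k,y(k))$ below by a constant via hypothesis 1), conclude coercivity, and take the global minimizer as the solution, with non-triviality following because $y=\theta$ fails the equation at $k_{0}$ when $f(k_{0},0)\neq 0$. Your write-up is in fact slightly more explicit than the paper's (spelling out the sign bookkeeping and the finite-dimensional compactness argument for attaining the infimum), but the substance is identical.
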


\begin{proof}
For all $k\in Z[1,N]$ it follows that 
\begin{equation*}
\int\limits_{0}^{s}f(k,t)dt\geq \int\limits_{0}^{s}-sgn(s)|f(k,t)|dt\geq
\int\limits_{-m}^{m}-sgn(s)|f(k,t)|dt>-\infty
\end{equation*}
since $f$ is continous on second variable and $[-m,m]$ bounded.\newline
Put $C_{1}=\sum\limits_{k=1}^{N}\int\limits_{m}^{-m}-|f(k,t)|dt$. We have%
\begin{equation*}
{\small \bigskip }%
\begin{array}{l}
{\small J(y)}{\small =}\sum\limits_{k=1}^{N+2}{\small (}\frac{p(k)}{2}%
{\small (\Delta }^{2}{\small y(k-2))}^{2}{\small -}\sum\limits_{k=1}^{N+1}%
\frac{q(k)}{2}{\small (\Delta y(k-1))}^{2}{\small +\sum%
\limits_{k=1}^{N}F(k,y(k))\geq \bigskip } \\ 
\sum\limits_{k=1}^{N+2}\frac{p_{\min }}{2}{\small (\Delta }^{2}{\small %
y(k-2))}^{2}{\small -}\sum\limits_{k=1}^{N+1}\frac{q_{\max }}{2}{\small %
(\Delta y(k-1))}^{2}{\small +}C_{1}\geq \frac{1}{2}{\small (}\eta ^{\prime
}(p)p_{\min }{\small -\eta (q)q}_{\max })||y||^{2}+C_{1}\text{.}%
\end{array}%
\end{equation*}
\end{proof}

And from 2) it follows that $J$ is coercive, and since it is coercive and $%
C^{1}$ there exist critical point $y_{0}\in E$ such that $%
J(y_{0})=\min_{y\in E}J(y)$. Therefore BVP (\ref{BVP}) has a solution. If $%
f(k_{0},0)\neq 0$, for $y=\theta $ we have

\begin{equation*}
\Delta ^{2}(p(k_{0})\Delta ^{2}y(k_{0}-2))+\Delta (q(k_{0})\Delta
y(k_{0}-1))+f(k_{0},y(k_{0}))=f(k_{0},0)\neq 0\newline
\end{equation*}

hence $\theta $ is not a solution to BVP (\ref{BVP})

\begin{theorem}
Assume that 1), 2) of theorem (\ref{1sol}) hold, $f$ is non-decreasing on $s$
for $k\in \lbrack 1,N]$ and \textit{\ }$p_{\min }\eta ^{\prime }(p)>q_{\max
}\eta (q)$ Then BVP\ (\ref{BVP}) has exactly one solution.
\end{theorem}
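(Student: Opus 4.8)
The plan is to read off existence for free and then spend all the work on uniqueness. Since hypotheses 1) and 2) of Theorem \ref{1sol} are assumed here, that theorem already guarantees that $J$ is coercive and $C^1$ and that a critical point $y_0$ exists with $J(y_0)=\min_{y\in E}J(y)$, giving a solution of (\ref{BVP}). So the only new content is that this solution is the \emph{only} one. The strategy I would use is to show that under the extra monotonicity hypothesis on $f$ the functional $J$ is \textbf{strictly convex} on the finite-dimensional space $E$; a strictly convex $C^1$ functional that attains its infimum has exactly one critical point, because for a convex differentiable functional every critical point is a global minimizer, and strict convexity forces the minimizer to be unique.

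I would split $J$ as $J=Q+\Phi$, where
\begin{equation*}
Q(y)=\sum_{k=1}^{N+2}\frac{p(k)}{2}(\Delta^{2}y(k-2))^{2}-\sum_{k=1}^{N+1}\frac{q(k)}{2}(\Delta y(k-1))^{2},\qquad \Phi(y)=\sum_{k=1}^{N}F(k,y(k)).
\end{equation*}
First I would show $Q$ is strictly convex. The chain of inequalities in the proof of Theorem \ref{1sol} (bounding $p(k)$ below by $p_{\min}$, $q(k)$ above by $q_{\max}$, and then applying Lemmas \ref{nier4} and \ref{lemmalamb} according to the signs of $p_{\min}$ and $q_{\max}$) does not use the nonlinear term at all and in fact yields $Q(y)\geq \tfrac12\bigl(p_{\min}\eta'(p)-q_{\max}\eta(q)\bigr)\|y\|^{2}$ for all $y\in E$. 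Because $Q$ is a quadratic form, this says that its associated symmetric matrix is positive definite precisely when $p_{\min}\eta'(p)>q_{\max}\eta(q)$, which is exactly the new hypothesis; hence $Q(y)>0$ for every $y\neq\theta$ and $Q$ is strictly convex.

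Next I would check that $\Phi$ is convex. For each fixed $k$, $F(k,\cdot)$ is an antiderivative of $f(k,\cdot)$, and since $f(k,\cdot)$ is assumed non-decreasing, $F(k,\cdot)$ is convex on $\mathbb{R}$; as $\Phi$ is a sum of convex functions of the single coordinates $y(k)$, it is convex on $E$. Adding a strictly convex $Q$ to a convex $\Phi$ gives a strictly convex $J$, and combined with the existence of a minimizer from Theorem \ref{1sol} this yields exactly one critical point, hence exactly one solution of (\ref{BVP}).

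I expect the genuinely delicate point to be the final convexity-to-uniqueness step rather than any computation. The monotonicity of $f$ is only assumed to be non-strict, so $\Phi$ need not be strictly convex and all of the strictness must be supplied by $Q$; I would therefore be careful to argue strict convexity of the \emph{sum} from strict convexity of one summand and plain convexity of the other. The remaining subtlety is the standard but worth-stating fact that for a convex $C^1$ functional a critical point is automatically a global minimizer, so that uniqueness of the minimizer (from strict convexity) already forces uniqueness among \emph{all} critical points, not merely among minimizers; everything else reduces to the sign bookkeeping already carried out for Theorem \ref{1sol}.
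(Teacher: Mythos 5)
Your proposal is correct and takes essentially the same route as the paper: the paper also splits $J$ into the quadratic part (its $I$, your $Q$) plus the convex term $\sum_{k=1}^{N}F(k,y(k))$, uses the monotonicity of $f$ for convexity of the latter, bounds the quadratic part below by $\tfrac{1}{2}(p_{\min }\eta ^{\prime }(p)-q_{\max }\eta (q))||h||^{2}>0$ via Lemmas \ref{nier4} and \ref{lemmalamb}, and concludes uniqueness from strict (in the paper, strong) convexity of the sum. The only cosmetic difference is that the paper verifies convexity by computing $I(y+h)-I(y)-\delta I(y;h)$ directly, which for a quadratic functional is exactly your $Q(h)$, so the two verifications coincide; your explicit remark that a critical point of a convex $C^{1}$ functional is a global minimizer fills in a step the paper leaves implicit.
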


\begin{proof}
By assumptions 1) and 2) of theorem (\ref{1sol}) there exists at least one
solution to BVP (\ref{BVP}). Since $f$ is non-decreasing on $F$ is convex on 
$s$ for $k\in \lbrack 1,N]$, and therefore $y\rightarrow $ $%
\sum\limits_{k=1}^{N}F(k,y(k))$ is convex. Let for $y\in E$, 
\begin{equation*}
I(y)=\sum\limits_{k=1}^{N+2}{\small (}\frac{p(k)}{2}{\small (\Delta }^{2}%
{\small y(k-2))}^{2}{\small -}\sum\limits_{k=1}^{N+1}\frac{q(k)}{2}{\small %
(\Delta y(k-1))}^{2}.
\end{equation*}%
For arbitrary fixed $y,h\in E$, $h\neq 0$ we have

\begin{equation*}
\begin{array}{l}
I(y+h)-I(y)=\sum\limits_{k=1}^{N+2}\frac{p(k)}{2}[2\Delta ^{2}h(k-2)\Delta
^{2}y(k-2)+\Delta ^{2}h(k-2)^{2}]- \\ 
\sum\limits_{k=1}^{N+1}\frac{q(k)}{2}[2\Delta h(k-1)\Delta y(k-1)+\Delta
h(k-1)^{2}]-\sum\limits_{k=1}^{N}[{\small \Delta }^{2}({\small p(k)\Delta }%
^{2}{\small y(k-2))}+{\small \Delta (q(k)y(k-1))}{\small =} \\ 
\sum\limits_{k=1}^{N+2}p(k)\Delta ^{2}y(k-2)\Delta
^{2}h(k-2)+\sum\limits_{k=1}^{N+2}\frac{1}{2}p(k)\Delta ^{2}h(k-2)^{2}- \\ 
\sum\limits_{k=1}^{N+1}q(k)\Delta y(k-1)\Delta
h(k-1)-\sum\limits_{k=1}^{N+1}\frac{1}{2}q(k)\Delta h(k-1)^{2}%
\end{array}%
\end{equation*}

and

\begin{equation*}
\begin{array}{l}
I(y+h)-I(y)-\delta I(y;h)= \\ 
\sum\limits_{k=1}^{N+2}p(k)\Delta ^{2}y(k-2)\Delta
^{2}h(k-2)+\sum\limits_{k=1}^{N+2}\frac{1}{2}p(k)\Delta
^{2}h(k-2)^{2}-\sum\limits_{k=1}^{N+1}q(k)\Delta y(k-1)\Delta h(k-1)- \\ 
\sum\limits_{k=1}^{N+1}\frac{1}{2}q(k)\Delta
h(k-1)^{2}-\sum\limits_{k=1}^{N+2}{\small p(k)\Delta }^{2}{\small %
y(k-2)\Delta }^{2}{\small h(k-2)+\sum\limits_{k=1}^{N+1}q(k)\Delta
y(k-1)\Delta h(k-1)=} \\ 
\sum\limits_{k=1}^{N+2}\frac{1}{2}p(k)\Delta
^{2}h(k-2)^{2}-\sum\limits_{k=1}^{N+1}\frac{1}{2}q(k)\Delta h(k-1)^{2}\geq
\\ 
\frac{1}{2}\left[ \sum\limits_{k=1}^{N+2}p_{\min }\Delta
^{2}h(k-2)^{2}-\sum\limits_{k=1}^{N+1}q_{\max }\Delta h(k-1)^{2}\right]
\geq \frac{1}{2}(p_{\min }\eta ^{\prime }(p)-q_{\max }\eta (q))||h||^{2}>0%
\end{array}%
\end{equation*}%
hence $I$ is strongly convex. Therefore $J$ is sum of convex and strongly
convex functionals, hence it is strongly convex. That implies that the
solution to (\ref{BVP}) is unique.
\end{proof}

Put 
\begin{equation*}
\alpha _{1}=\eta (q){\small q}_{\max }{\small -\eta ^{\prime }(p)p}_{\min }
\end{equation*}

\begin{lemma}
\label{lem1} Assume that \newline
1) $\min_{k\in Z[1,N]}\lim_{s\rightarrow +\infty }\frac{f(k,s)}{s}{\small >}%
\alpha _{1}$ \newline
2) there exists $S>0$ such that $f(k,-s)\leq -f(k,s)$. \newline
Then $J$ is coercive and BVP\ (\ref{BVP}) has at least one solution.
\end{lemma}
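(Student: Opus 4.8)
The plan is to establish coercivity of $J$ under the two hypotheses, after which the existence of a solution follows exactly as in Theorem \ref{1sol}: a coercive $C^1$ functional on the finite-dimensional space $E$ attains its minimum, and by Lemma \ref{MainLemma} that minimizer is a solution to BVP (\ref{BVP}). So the entire task reduces to proving $J(y)\to+\infty$ as $\|y\|\to\infty$.

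First I would control the quadratic part of $J$ from below. Using the definitions of $\eta^{\prime}(p)$ and $\eta(q)$ together with Lemmas \ref{nier4} and \ref{lemmalamb}, I would bound
\begin{equation*}
\sum_{k=1}^{N+2}\frac{p(k)}{2}(\Delta^2 y(k-2))^2-\sum_{k=1}^{N+1}\frac{q(k)}{2}(\Delta y(k-1))^2\geq \frac{1}{2}\bigl(\eta^{\prime}(p)p_{\min}-\eta(q)q_{\max}\bigr)\|y\|^2=-\tfrac{1}{2}\alpha_1\|y\|^2,
\end{equation*}
exactly the estimate already carried out in Theorem \ref{1sol}. The point is that this quadratic term may now be negative (we no longer assume condition 2) of Theorem \ref{1sol}), so coercivity must instead come from the superlinear growth of $F$ encoded in hypothesis 1).

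Next I would extract the growth of $F$. Hypothesis 1) says $\liminf_{s\to+\infty} f(k,s)/s>\alpha_1$ uniformly in $k$, and hypothesis 2) (the oddness-type condition $f(k,-s)\le -f(k,s)$) is what propagates the superlinear behavior to $s\to-\infty$. From these I would argue that there is a constant $\beta>\alpha_1$ and a constant $C_2$ with $F(k,s)\ge \tfrac{\beta}{2}s^2+C_2$ for all $s$ and all $k\in Z[1,N]$; integrating the lower bound $f(k,t)\ge \beta t$ valid for large $|t|$ and absorbing the bounded discrepancy on a compact interval into $C_2$. Summing over $k$ then gives $\sum_{k=1}^N F(k,y(k))\ge \tfrac{\beta}{2}\|y\|^2+NC_2$. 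Combining this with the quadratic estimate yields
\begin{equation*}
J(y)\geq \tfrac{1}{2}(\beta-\alpha_1)\|y\|^2+NC_2,
\end{equation*}
and since $\beta-\alpha_1>0$ the right-hand side tends to $+\infty$, establishing coercivity.

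The main obstacle I anticipate is the passage from the $\liminf$ condition to the clean quadratic lower bound $F(k,s)\ge \tfrac{\beta}{2}s^2+C_2$. One must choose $\beta$ strictly between $\alpha_1$ and the $\liminf$, find a threshold $T$ (uniform in the finitely many $k$) beyond which $f(k,t)\ge \beta t$ for $t\ge T$, handle $t\le -T$ via hypothesis 2), and then integrate while keeping careful track of the sign of $s$ and of the bounded error incurred on $[-T,T]$. Since $k$ ranges over the finite set $Z[1,N]$, uniformity is automatic by taking a minimum over finitely many thresholds, so this step is technical rather than deep. Everything else — attainment of the minimum and identification with a solution — is immediate from coercivity and Lemma \ref{MainLemma}.
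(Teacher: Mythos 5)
Your proposal is correct and follows essentially the same route as the paper: the paper likewise converts hypothesis 2) into control of $f(k,s)/s$ as $s\rightarrow -\infty$, chooses $\beta>\alpha_{1}$ (with an explicit $\varepsilon=\frac{1}{2}(\beta-\alpha_{1})$) to obtain $F(k,s)\geq \frac{\beta-\varepsilon}{2}s^{2}+C$ for all $s$, combines this with the same $\eta^{\prime}(p)p_{\min}-\eta(q)q_{\max}=-\alpha_{1}$ estimate on the quadratic part, and concludes coercivity and existence by minimization. The only differences are notational (you integrate the pointwise inequality $f(k,t)\geq\beta t$ directly rather than phrasing things through limits and an auxiliary $\varepsilon$), not substantive.
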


\begin{proof}
For\ every $s<-S$ and fixed $k$ we have:\newline
\begin{eqnarray*}
f(k,-s) &\leq &-f(k,s) \\
\frac{f(k,-s)}{-s} &\geq &\frac{-f(k,s)}{-s} \\
\frac{f(k,-s)}{-s} &\geq &\frac{f(k,s)}{s} \\
\lim_{s\rightarrow -\infty }\frac{f(k,s)}{s} &\geq &\lim_{s\rightarrow
\infty }\frac{f(k,s)}{s}{\small >}\alpha _{1}\text{.}
\end{eqnarray*}%
Therefore $\min_{k\in Z[1,N]}\lim_{|s|\rightarrow +\infty }\frac{f(k,s)}{s}%
{\small >}\alpha _{1}$. Take a number $\beta $ such that 
\begin{equation*}
\min_{k\in Z[1,N]}\lim_{|s|\rightarrow \infty }\frac{f(k,s)}{s}\geq \beta 
{\small >}\alpha _{1}\text{.}
\end{equation*}%
for $\varepsilon =\frac{1}{2}(\beta -\alpha _{1})>0$, there exist a constant 
$r>0$%
\begin{eqnarray*}
\frac{f(k,s)}{s} &\geq &\beta -\varepsilon \text{ for }|s|\geq M\text{, }%
k\in Z[1,N]\text{,} \\
F(k,s) &\geq &\frac{\beta -\varepsilon }{2}s^{2}\text{ for }|s|\geq M\text{, 
}k\in Z[1,N]
\end{eqnarray*}%
Therefore, there exists a constant $C$ such that 
\begin{equation*}
F(k,s)\geq \frac{\beta -\varepsilon }{2}s^{2}+C\text{ for }s\in 
\mathbb{R}
\text{, }k\in Z[1,N]
\end{equation*}%
Using this inequality we obtain the following:%
\begin{equation*}
\begin{array}{l}
{\small J(y)=}\sum\limits_{k=1}^{N+2}{\small (}\frac{p(k)}{2}{\small %
(\Delta }^{2}{\small y(k-2))}^{2}{\small -}\sum\limits_{k=1}^{N+1}\frac{q(k)%
}{2}{\small (\Delta y(k-1))}^{2}{\small +\sum\limits_{k=1}^{N}F(k,y(k))\geq
\bigskip } \\ 
\sum\limits_{k=1}^{N+2}\frac{p_{\min }}{2}{\small (\Delta }^{2}{\small %
y(k-2))}^{2}{\small -}\sum\limits_{k=1}^{N+1}\frac{q_{\max }}{2}{\small %
(\Delta y(k-1))}^{2}{\small +\sum\limits_{k=1}^{N}}\frac{\beta -\varepsilon 
}{2}{\small y(k)}^{2}{\small +NC\geq \bigskip } \\ 
\frac{1}{2}{\small (}\eta ^{\prime }(p)p_{\min }{\small -\eta (q)q}_{\max }%
{\small +\beta -\varepsilon )||y||}^{2}{\small +NC\geq }\frac{1}{2}{\small (}%
\eta ^{\prime }(p)p_{\min }{\small -\eta (q)q}_{\max }{\small +\alpha }%
_{1}+\varepsilon {\small )||y||}^{2}{\small +NC}%
\end{array}%
\end{equation*}%
Then it follows from definition of $\alpha _{1}$ that $\eta ^{\prime
}(p)p_{\min }{\small -\eta (q)q}_{\max }{\small +\alpha }_{1}+\varepsilon
=\varepsilon >0$ and therefore $J$ is coercive. Since $J$ is coercive and $%
C^{1}$ there exist $x^{\ast }$ such that $J(x^{\ast })=\inf_{x\in E}J(x)$
and $J^{\prime }(x^{\ast })=\theta $.and therefore BVP (\ref{BVP}) has at
least one solution $x^{\ast }$.
\end{proof}

Put%
\begin{equation*}
\xi (q)=\left\{ 
\begin{array}{cc}
\lambda _{1} & q_{\min }\geq 0 \\ 
4 & q_{\min }<0%
\end{array}%
\right. \text{ for }Z[1,N+1]\rightarrow 
\mathbb{R}
\text{.}
\end{equation*}%
\newline
and%
\begin{equation*}
\alpha _{2}={\small \xi (q)}q_{\min }-{\small 16}p_{\max }
\end{equation*}

\begin{theorem}
\label{MPT1} Assume that $\dim E>1$, $p_{\max }>0$,\ 1) and 2) of lemma (\ref%
{lem1}) hold and that 
\begin{equation*}
\max_{k\in Z[1,N]}\lim_{s\rightarrow 0}\frac{f(k,s)}{s}<\alpha _{2}\text{.}
\end{equation*}%
Then BVP (\ref{BVP}) has at least two solutions.

\begin{proof}
Note that $J(\theta )=0$. By 1) and 2) of lemma (\ref{lem1}) and $J$ is
coercive and since it is also continous it satisfies P.S. condition.
Therefore there exists a critical point $y_{1}$.\newline
Take a number $\beta $ such that 
\begin{equation*}
\max_{k\in Z[1,N]}\lim_{s\rightarrow 0}\frac{f(k,s)}{s}\leq \beta <\alpha
_{2}\text{.}
\end{equation*}%
For $\varepsilon =\frac{1}{2}(\alpha _{2}-\beta )>0$, there exist a constant 
$\delta >0$ such that 
\begin{equation*}
\frac{f(k,s)}{s}\leq \beta +\varepsilon \text{, for all }|s|\leq \delta 
\text{ and }k\in Z[1,N]
\end{equation*}%
\newline
there exist $\delta >0$ such that%
\begin{eqnarray*}
\frac{f(k,s)}{s} &\leq &\beta +\varepsilon \\
F(k,s) &\leq &\frac{1}{2}(\beta +\varepsilon )s^{2}
\end{eqnarray*}%
for all $|s|\leq \delta $ and $k\in Z[1,N]$. Let 
\begin{equation*}
\Omega =\{y\in E:||y||<\delta \}\text{.}
\end{equation*}%
Then $\partial \Omega =\{y\in E:||y||=\delta \}$. Note that for all $y\in
\partial \Omega $ and $k\in Z[1,N]$ $|y(k)|\leq \delta $. and%
\begin{equation*}
\begin{array}{l}
{\small J(y)}{\small =}\sum\limits_{k=1}^{N+2}{\small (}\frac{p(k)}{2}%
{\small (\Delta }^{2}{\small y(k-2))}^{2}{\small -}\sum\limits_{k=1}^{N+1}%
\frac{q(k)}{2}{\small (\Delta y(k-1))}^{2}{\small +\sum%
\limits_{k=1}^{N}F(k,y(k))} \\ 
{\small \leq }\frac{1}{2}{\small (16}p_{\max }{\small -}q_{\min }{\small \xi
(q)+}\beta +\varepsilon )||y||^{2}=\frac{1}{2}{\small (16}p_{\max }{\small -}%
q_{\min }{\small \xi (q)+}\alpha _{2}-\varepsilon )||y||^{2}<0\text{.}%
\end{array}%
\end{equation*}%
\newline
and since $\partial \Omega $ is compact 
\begin{equation*}
\sup \{J(y):y\in \partial \Omega \}<0.
\end{equation*}%
Since $J$ is coercive, there exist $y_{1}$ with $||y_{1}||>\delta $ such
that $\min \{0,J(y_{1})\}=0$. Therefore 
\begin{equation*}
\min \{0,J(y_{1})\}>\sup \{J(y):y\in \partial \Omega \}\text{.}
\end{equation*}%
and $-J$ satisfies all assumptions or theorem (\ref{MPT}) what implies that
there exist $y_{2}\in E$ such that $J^{\prime }(y_{2})=\theta $, and 
\begin{equation*}
J(y_{2})=\sup_{h\in \Gamma }\min_{t\in \lbrack 0,1]}J(h(t))\text{.}
\end{equation*}%
Assume that $y_{1}=y_{2}.$That is 
\begin{equation*}
J(y_{2})=\sup_{h\in \Gamma }\min_{t\in \lbrack 0,1]}J(h(t))=\inf_{y\in
E}J(y)=J(y_{1}).
\end{equation*}%
Therefore for all $h\in \Gamma $ 
\begin{equation*}
\min_{t\in \lbrack 0,1]}J(h(t))\leq \inf_{y\in E}J(y)\text{.}
\end{equation*}%
and since $h([0,1])\subseteq E$ 
\begin{equation*}
\min_{t\in \lbrack 0,1]}J(h(t))\geq \inf_{y\in E}J(y)
\end{equation*}%
Therefore 
\begin{equation*}
\min_{t\in \lbrack 0,1]}J(h(t))=\inf_{y\in E}J(y)
\end{equation*}%
Since $\dim E>1$ there exist $h_{1},h_{2}\in \Gamma $ such that $%
h_{1}[(0,1)]\cap h_{2}[(0,1)]=\varnothing $. From the continuity of $h_{1}$
and $h_{2}$ there exist $t_{1},t_{2}\in (0,1)$ such that $%
h_{1}(t_{1})=\min_{t\in \lbrack 0,1]}J(h_{1}(t_{1}))$, $h_{2}(t_{2})=\min_{t%
\in \lbrack 0,1]}J(h_{2}(t_{2}))$.Therefore $h_{1}(t_{1})$ and $h_{2}(t_{2})$
are two different critical points.i.e. they are two different solutions to
BVP\ (\ref{BVP})\newline
\end{proof}
\end{theorem}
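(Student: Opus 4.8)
The plan is to exhibit two distinct critical points of $J$: a global minimizer and a saddle point produced by the mountain pass theorem (Theorem \ref{MPT}) applied to $-J$. First I would record the structural facts. By Lemma \ref{MainLemma} we have $J\in C^1(E,\mathbb{R})$, and assumptions 1) and 2) are precisely those of Lemma \ref{lem1}, so $J$ is coercive. Because $E$ is finite dimensional, coercivity together with continuity yields both the Palais--Smale condition (a bounded-energy sequence is bounded, hence has a convergent subsequence) and a global minimizer $y_1\in E$ with $J(y_1)=\inf_{y\in E}J(y)\le J(\theta)=0$; this $y_1$ is the first critical point.

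Next I would set up the mountain pass geometry by examining $J$ near $\theta$. From $\max_{k}\lim_{s\to 0}f(k,s)/s<\alpha_2$ I would choose $\beta$ with $\max_{k}\lim_{s\to 0}f(k,s)/s\le\beta<\alpha_2$ and put $\varepsilon=\tfrac12(\alpha_2-\beta)$, so that $F(k,s)\le\tfrac12(\beta+\varepsilon)s^2$ for $|s|\le\delta$ and all $k$. On $\partial\Omega=\{y:\|y\|=\delta\}$ I would bound the quadratic part from above: using $p_{\max}>0$ together with $\sum(\Delta^2 y(k-2))^2\le 16\|y\|^2$ from Lemma \ref{nier4} for the $p$-term, and the estimate encoded in $\xi(q)$ (either $\sum(\Delta y(k-1))^2\ge\lambda_1\|y\|^2$ from Lemma \ref{lemmalamb} when $q_{\min}\ge 0$, or $\le 4\|y\|^2$ from Lemma \ref{nier4} when $q_{\min}<0$) for the $q$-term. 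Recalling $\alpha_2=\xi(q)q_{\min}-16p_{\max}$, this gives $J(y)\le\tfrac12(\beta+\varepsilon-\alpha_2)\|y\|^2<0$ on $\partial\Omega$, and by compactness $\sup_{\partial\Omega}J<0$.

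Then I would apply Theorem \ref{MPT} to $T=-J$. Here $T(\theta)=0$ and $\inf_{\partial\Omega}T=-\sup_{\partial\Omega}J>0$, and coercivity of $J$ lets me fix $x_1\notin\Omega$ with $J(x_1)$ so large that $\max\{T(\theta),T(x_1)\}=0<\inf_{\partial\Omega}T$. The theorem then delivers a critical point $y_2$ of $-J$, hence of $J$, with $J(y_2)=\sup_{h\in\Gamma}\min_{t\in[0,1]}J(h(t))$. Since every path $h\in\Gamma$ joins $\theta$ to $x_1$ and must therefore cross $\partial\Omega$, we get $\min_t J(h(t))\le\sup_{\partial\Omega}J<0$, so $J(y_2)<0=J(\theta)$.

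It remains to show the two solutions are distinct, which I expect to be the main obstacle. If $J(y_2)>\inf_E J=J(y_1)$ the critical points have different values and are distinct, and the proof is complete. The delicate case is $J(y_2)=\inf_E J$: then $\sup_{h\in\Gamma}\min_t J(h(t))=\inf_E J$ forces $\min_t J(h(t))=\inf_E J$ for \emph{every} $h\in\Gamma$, so each path passes through a global minimizer of $J$. Here the hypothesis $\dim E>1$ is essential: it lets me select two paths $h_1,h_2\in\Gamma$ with disjoint interiors $h_1((0,1))\cap h_2((0,1))=\varnothing$, and the interior points where $h_1$ and $h_2$ attain the value $\inf_E J$ are then two distinct global minimizers, hence two distinct critical points. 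In either case $J$ has at least two critical points, which by Lemma \ref{MainLemma} are two distinct solutions of (\ref{BVP}).
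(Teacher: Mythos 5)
Your proposal is correct and follows essentially the same route as the paper: a global minimizer from coercivity, the mountain pass theorem applied to $-J$ using the smallness of $F$ near $\theta$ (via $\alpha_2$) to get the geometry, and, in the degenerate case where the mountain pass value equals $\inf_E J$, two paths with disjoint interiors (using $\dim E>1$) yielding two distinct global minimizers. Your write-up is in fact tidier than the paper's at the distinctness step, where the paper reuses the symbol $y_1$ for two different objects, but the underlying argument is the same.
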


\begin{theorem}
\label{theo1} Assume that $f$ is odd on its second variable for all $k\in
Z[1,N]$ and that\newline
1) $p_{\min }\geq 0$\newline
2) $\min_{k\in Z[1,N]}\lim_{s\rightarrow \infty }\frac{f(k,s)}{s}>\alpha
_{1} $\newline
3) $\max_{k\in Z[1,N]}\lim_{s\rightarrow 0^{+}}\frac{f(k,s)}{s}<\alpha _{2}$%
\newline
Then BVP (\ref{BVP}) has at least $2N$ distinct solutions.
\end{theorem}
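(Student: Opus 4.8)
The plan is to apply Clark's theorem (Theorem \ref{ClarkTheo}) to the functional $J$ with $n = N = \dim E$, which will produce at least $N$ distinct pairs of critical points, hence $2N$ distinct solutions of (\ref{BVP}) by Lemma \ref{MainLemma}. First I would record the structural hypotheses of Clark's theorem. Since $f$ is odd in its second variable, $F(k,\cdot)$ is even: the substitution $t \mapsto -t$ in $\int_0^s f(k,t)\,dt$ gives $F(k,-s) = F(k,s)$. As the first two sums defining $J$ involve only squares of $\Delta^2 y$ and $\Delta y$, the functional $J$ is even, and $J(\theta) = 0$. By Lemma \ref{MainLemma}, $J \in C^1(E,\mathbb{R})$.

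Next I would establish coercivity and the Palais--Smale condition. Hypothesis 2) here is exactly hypothesis 1) of Lemma \ref{lem1}, while oddness of $f$ gives $f(k,-s) = -f(k,s)$, which is hypothesis 2) of Lemma \ref{lem1}; hence Lemma \ref{lem1} applies and $J$ is coercive, so in particular bounded from below. Because $E$ is finite dimensional, coercivity forces any sequence on which $J$ is bounded to be bounded, and a bounded sequence in $E$ has a convergent subsequence; thus $J$ satisfies the P.S. condition.

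The core step is to produce the required odd sphere $K$ on which $J$ is strictly negative. Following the local estimate already used in the proof of Theorem \ref{MPT1}, I would choose $\beta$ with $\max_{k} \lim_{s\to 0^+} f(k,s)/s \leq \beta < \alpha_2$ (the one-sided limit equals the two-sided one because $f(k,s)/s$ is even in $s$), set $\varepsilon = \tfrac12(\alpha_2 - \beta) > 0$, and obtain $\delta > 0$ with $F(k,s) \leq \tfrac12(\beta+\varepsilon)s^2$ for $|s| \leq \delta$. On the sphere $K = \{y \in E : \|y\| = \delta\}$ one has $|y(k)| \leq \delta$, so using $p(k) \leq p_{\max}$ together with $\sum (\Delta^2 y(k-2))^2 \leq 16\|y\|^2$ from Lemma \ref{nier4} (valid since $p_{\max} \geq p_{\min} \geq 0$), and using $q(k) \geq q_{\min}$ together with Lemma \ref{lemmalamb} or Lemma \ref{nier4} according to the sign of $q_{\min}$ to bound $-\sum \tfrac{q(k)}{2}(\Delta y(k-1))^2 \leq -\tfrac12 \xi(q) q_{\min}\|y\|^2$, I obtain
\begin{equation*}
J(y) \leq \tfrac12\bigl(16 p_{\max} - \xi(q) q_{\min} + \beta + \varepsilon\bigr)\|y\|^2 = \tfrac12(\beta + \varepsilon - \alpha_2)\|y\|^2 < 0
\end{equation*}
for every $y \in K$. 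Since $K$ is compact this yields $\sup_{y\in K} J(y) < 0$.

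Finally, since $E$ is an $N$-dimensional normed space whose norm is the Euclidean norm in the coordinates $y(1),\dots,y(N)$, the sphere $K$ is homeomorphic to $S^{N-1}$ via the radial map $y \mapsto y/\delta$, which is odd. All hypotheses of Theorem \ref{ClarkTheo} then hold with $n = N$, so $J$ possesses at least $N$ distinct pairs of critical points; by Lemma \ref{MainLemma} these give $2N$ distinct solutions of (\ref{BVP}). I expect the main obstacle to be the sphere estimate: one must verify carefully that the case distinctions hidden in $\xi(q)$ (and the sign of $p_{\max}$) make the coefficient of $\|y\|^2$ collapse exactly to $\beta + \varepsilon - \alpha_2$, and that the one-sided limit in hypothesis 3) is genuinely enough once oddness of $f$ is invoked.
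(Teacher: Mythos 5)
Your proposal is correct and follows essentially the same route as the paper: coercivity and the P.S. condition via Lemma \ref{lem1} (using oddness of $f$ to supply its second hypothesis), then the small-sphere estimate $J(y)\leq \tfrac12(16p_{\max}-\xi(q)q_{\min}+\beta+\varepsilon)\|y\|^2=-\tfrac12\varepsilon\|y\|^2<0$, and finally Clark's theorem with $n=N$ to get $N$ pairs of critical points. Your added justifications (why $p_{\min}\geq 0$ is needed for the $p$-term estimate, why the one-sided limit in 3) suffices, and why coercivity in finite dimension yields P.S.) are all correct and in fact make explicit steps the paper leaves implicit.
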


\begin{proof}
Since $f$ is odd, assumption 2) of lemma (\ref{lem1}) is satisfied. From
this and 2) by lemma (\ref{lem1}) it follows that $J$ is coercive (and
therefore bounded from below on $E$ and satisfying the P.S. condition).
Since $f$ is odd on its second variable, $J$ is even and $\frac{f(k,-s)}{-s}=%
\frac{f(k,s)}{s}$ for $s\in 
\mathbb{R}
$ and $k=1,...,N$. Therefore there exist limits $\lim_{s\rightarrow 0}\frac{%
f(k,s)}{s}$, $k=1,...N$. Take a number $\beta $ such that%
\begin{equation*}
\max_{k\in Z[1,N]}\lim_{s\rightarrow 0}\frac{f(k,s)}{s}\leq \beta <\alpha
_{2}.
\end{equation*}%
For $\varepsilon =\frac{1}{2}(\alpha _{2}-\beta )>0$, there exist a constant 
$\delta >0$ such that 
\begin{equation*}
\begin{array}{l}
\frac{f(k,s)}{s}\leq \beta +\varepsilon \\ 
f(k,s)\leq \frac{\beta +\varepsilon }{2}s \\ 
F(k,s)\leq \frac{\beta +\varepsilon }{2}s^{2}%
\end{array}%
\end{equation*}%
for all $|s|\leq \delta $ and $k\in Z[1,N]$\newline
It follows that for all $y\in K$ and $k\in Z[1,N]$, $F(k,y(k))\leq \frac{%
\beta +\varepsilon }{2}y(k)^{2}$. Let $0<\delta ^{\prime }<\delta $, $%
K=\{y\in E:||y||=\delta ^{\prime }\}$. Then $K$ is homeomorphic to a $%
S^{N-1} $ by an odd map. For $k\in Z[1,N]$ and $y\in K$ we have $|y(k)|\leq
||y||=\delta ^{\prime }<\delta $. Therefore for $y\in K$.

\begin{equation*}
\begin{array}{l}
{\small J(y)}{\small =}\sum\limits_{k=1}^{N+2}\frac{p(k)}{2}{\small (\Delta 
}^{2}{\small y(k-2))}^{2}{\small -}\sum\limits_{k=1}^{N+1}\frac{q(k)}{2}%
{\small (\Delta y(k-1))}^{2}{\small +\sum\limits_{k=1}^{N}F(k,y(k))\leq }%
\frac{1}{2}{\small \cdot (16}p_{\max }{\small -\xi (q)}q_{\min }{\small %
+\beta +\varepsilon ))||y||}^{2} \\ 
{\small \leq }\frac{1}{2}{\small \cdot (16}p_{\max }{\small -\xi (q)}q_{\min
}{\small +\beta +\varepsilon ))\delta }^{\prime }{}^{2}{\small <}\frac{1}{2}%
{\small \cdot (16}p_{\max }{\small -\xi (q)}q_{\min }{\small +\alpha
_{2}-\varepsilon )\delta }^{\prime 2}{\small <0}\text{,}%
\end{array}%
\end{equation*}%
Since $K$ is compact $\sup K=\max K<0$. Therefore all assumptions of theorem
(\ref{ClarkTheo}) are satisfied and it follows that $J$ has at least $N$
distinct pairs of critical points, that is BVP (\ref{BVP}) has at least N
distinct pairs of solutions. \newline
Put 
\begin{equation*}
\alpha _{3}=\min \{{\small \lambda }_{1}{\small q}_{\min }{\small -\lambda }%
_{2}{\small p}_{\max }{\small ,4q}_{\min }{\small -\lambda }_{2}{\small p}%
_{\max }\}
\end{equation*}
\end{proof}

\begin{theorem}
\label{theo2}Assume that $f$ is odd on its second variable for all $k\in
Z[1,N]$ and that\newline
1) $p_{\max }\leq 0$ \newline
2) $\min_{k\in Z[1,N]}\lim_{s\rightarrow \infty }\frac{{\small f(k,s)}}{%
{\small s}}{\small >}\alpha _{2}$\newline
3) $\max_{k\in Z[1,N]}\lim_{s\rightarrow 0}\frac{{\small f(k,s)}}{{\small s}}%
{\small <}\alpha _{3}$\newline
Then BVP (\ref{BVP})\ has at least $2N$ distinct solutions.
\end{theorem}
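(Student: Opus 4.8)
The plan is to reproduce the scheme of Theorem \ref{theo1} with the sign hypothesis on $p$ reversed, closing the argument again with Clark's theorem (Theorem \ref{ClarkTheo}). Since $f$ is odd in its second variable, $F(k,\cdot)$ is even, so $J$ is even and $J(\theta)=0$; moreover oddness gives $f(k,-s)=-f(k,s)$, which is precisely hypothesis 2) of Lemma \ref{lem1} and which also forces the two-sided limit $\lim_{s\to 0}f(k,s)/s$ to exist for each $k$. Thus the two structural requirements of Theorem \ref{ClarkTheo} that are independent of the geometry — evenness and $J(\theta)=0$ — come for free, and it remains to secure (i) that $J$ is bounded below and satisfies the P.S. condition, and (ii) that $\sup_K J<0$ on a suitable sphere $K$ homeomorphic to $S^{N-1}$.

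For (i) I would argue coercivity from the behaviour of $f$ at infinity, by the method of the proof of Lemma \ref{lem1}. Hypothesis 2) lets me pick $\beta$ with $\min_{k}\lim_{s\to\infty}f(k,s)/s\ge\beta>\alpha_2$, and the equality of the limits at $\pm\infty$ (again from oddness) produces a constant $C$ with $F(k,s)\ge\tfrac{\beta-\varepsilon}{2}s^2+C$ for all $s$, where $\varepsilon=\tfrac12(\beta-\alpha_2)$. Substituting into $J$ and estimating the two quadratic sums by Lemmas \ref{nier4} and \ref{lemmalamb} — now using $p_{\max}\le 0$ to decide which of $\lambda_2$ or $16$ controls the $p$-sum — is meant to yield $J(y)\ge c\|y\|^2+\text{const}$ with $c>0$, so that $J$ is coercive. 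Because $\dim E=N<\infty$, coercivity immediately gives boundedness from below and the P.S. condition: any sequence along which $J$ stays bounded is itself bounded, hence has a convergent subsequence.

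Step (ii) is the genuinely case-specific part, and it is here that $\alpha_3$ (rather than $\alpha_2$) is the operative constant. First note that by Lemma \ref{nier4} all eigenvalues of $V^{T}V$ are at most $4$, so $\lambda_1\le 4$; consequently $\min\{\lambda_1 q_{\min},4q_{\min}\}=\xi(q)q_{\min}$ and $\alpha_3=\xi(q)q_{\min}-\lambda_2 p_{\max}$. Hypothesis 3) lets me choose $\beta$ with $\max_k\lim_{s\to 0}f(k,s)/s\le\beta<\alpha_3$ and $\varepsilon=\tfrac12(\alpha_3-\beta)>0$, giving $\delta>0$ with $F(k,s)\le\tfrac{\beta+\varepsilon}{2}s^2$ whenever $|s|\le\delta$. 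Fix $0<\delta'<\delta$ and set $K=\{y\in E:\|y\|=\delta'\}$, homeomorphic to $S^{N-1}$ by the odd map $y\mapsto y/\|y\|$. For $y\in K$ one has $|y(k)|\le\delta'<\delta$, so the $F$-sum is bounded above by $\tfrac{\beta+\varepsilon}{2}\|y\|^2$. The new point is that, since $p_{\max}\le 0$, Lemma \ref{lemmalamb} forces $\sum_{k=1}^{N+2}p(k)(\Delta^2 y(k-2))^2\le\lambda_2 p_{\max}\|y\|^2$ (the lower eigenvalue bound, not $16$, is operative when $p_{\max}\le 0$), while the $q$-sum is bounded so as to contribute $-\xi(q)q_{\min}\|y\|^2$. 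Collecting terms gives $J(y)\le\tfrac12(\lambda_2 p_{\max}-\xi(q)q_{\min}+\beta+\varepsilon)\|y\|^2=\tfrac12(-\alpha_3+\beta+\varepsilon)\|y\|^2<0$ on $K$, and compactness of $K$ yields $\sup_K J=\max_K J<0$. Theorem \ref{ClarkTheo} then produces at least $N$ distinct pairs $\{y,-y\}$ of critical points of $J$, i.e. at least $2N$ distinct solutions of (\ref{BVP}) by Lemma \ref{MainLemma}.

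I expect the main obstacle to be the coercivity estimate in step (i). With $p_{\max}\le 0$ one has $p_{\min}<0$, so the honest lower bound for the $p$-sum at infinity carries the factor $16$ on $p_{\min}$, whereas the upper bound near $0$ in step (ii) carries the factor $\lambda_2$ on $p_{\max}$; since $\alpha_3\le\alpha_2$, reconciling hypothesis 2) (stated with $\alpha_2$) with a strictly positive leading coefficient $c$ is the delicate bookkeeping on which the whole argument turns. Pinning down exactly which eigenvalue constant multiplies $p$ in each regime — $16$ at $\infty$ versus $\lambda_2$ near $0$ — is the one place where the hypotheses must be used with care; once (i) is settled, the verification in (ii) and the final count via Clark's theorem are routine.
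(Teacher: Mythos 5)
Your proposal follows exactly the route the paper takes: Clark's theorem (Theorem \ref{ClarkTheo}) applied to the even functional $J$, with boundedness from below and the P.S.\ condition claimed via the coercivity argument of Lemma \ref{lem1}, and negativity of $J$ on a small sphere $K=\{y\in E:\|y\|=\delta'\}$ derived from hypothesis 3). Your step (ii) is line for line the paper's computation, including the identification $\alpha_3=\xi(q)q_{\min}-\lambda_2 p_{\max}$ and the use of $p_{\max}\le 0$ to bound the $p$-sum by $\tfrac{1}{2}\lambda_2 p_{\max}\|y\|^2$ via Lemma \ref{lemmalamb}; the final count of $N$ pairs, hence $2N$ solutions, is also the paper's.

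The obstacle you flag in step (i) is a genuine gap, but it is the paper's gap as much as yours: the paper's entire treatment of coercivity is the sentence ``since $f$ is odd and 2) holds, by Lemma \ref{lem1} $J$ is even, bounded from below and satisfies the P.S.\ condition,'' and that citation is not justified. Lemma \ref{lem1} requires $\min_{k}\lim_{s\to\infty}f(k,s)/s>\alpha_1$, whereas hypothesis 2) of the theorem only gives $>\alpha_2$; and under hypothesis 1) (so $p_{\min}\le p_{\max}\le 0$ and $\eta'(p)=16$ unless $p\equiv 0$) one has
\begin{equation*}
\alpha_1-\alpha_2=\bigl(\eta(q)q_{\max}-\xi(q)q_{\min}\bigr)+16\bigl(p_{\max}-p_{\min}\bigr)\ge 0,
\end{equation*}
with strict inequality in general (already for constant $q\neq 0$ the first bracket equals $(4-\lambda_1)|q|>0$). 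So hypothesis 2) does not imply the hypothesis of Lemma \ref{lem1}, and the honest lower bound one can extract, $J(y)\ge\tfrac12(\beta-\varepsilon-\alpha_1)\|y\|^2+NC$ with only $\beta>\alpha_2$ available, need not have a positive leading coefficient. Your diagnosis is exactly right: at infinity the coefficient on the $p$-sum is necessarily $16p_{\min}$ (a lower bound must use $p_{\min}$, and with $p_{\min}\le 0$ the constant $16$ of Lemma \ref{nier4} is the operative one), while near zero it is $\lambda_2 p_{\max}$, and no bookkeeping reconciles this with $\alpha_2$; the constant that coercivity needs is $\alpha_1$, not $\alpha_2$. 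The statement and proof become correct if hypothesis 2) is strengthened to $\min_{k}\lim_{s\to\infty}f(k,s)/s>\alpha_1$, in which case your step (i) is Lemma \ref{lem1} verbatim and the rest of your argument is complete. Relative to the paper, you have reproduced the intended proof and, in addition, correctly located the one step at which it is unjustified.
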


\begin{proof}
Since $f$ is odd and 2) holds, by lemma (\ref{lem1}) $J$ is even, bounded
from below and satisfies P.S. condition. Take a number $\beta $ such that $%
\lim_{s\rightarrow 0}\frac{f(k,s)}{s}\leq \beta $.\newline
For $\varepsilon =\frac{1}{2}(\alpha _{3}-\beta )>0$ there exist a constant $%
\delta >0$ such that%
\begin{equation*}
\frac{f(k,s)}{s}\leq \beta +\varepsilon \text{ for all }|s|\leq \delta ,%
\text{ and }k\in Z[1,N].
\end{equation*}
\end{proof}

As in proof of theorem for fixed $\delta ^{\prime }\in (0,\delta )$ and each 
$y\in K=\{y\in E:||y||=\delta ^{\prime }\}$ we have

\begin{equation*}
\begin{array}{l}
{\small J(y)=}\sum\limits_{k=1}^{N+2}{\small (}\frac{p(k)}{2}{\small %
(\Delta }^{2}{\small y(k-2))}^{2}{\small -}\sum\limits_{k=1}^{N+1}\frac{q(k)%
}{2}{\small (\Delta y(k-1))}^{2}{\small +\sum\limits_{k=1}^{N}F(k,y(k))\leq 
} \\ 
\sum\limits_{k=1}^{N+2}\frac{p_{\max }}{2}{\small (\Delta }^{2}{\small %
y(k-2))}^{2}{\small -}\sum\limits_{k=1}^{N+1}\frac{q_{\min }}{2}{\small %
(\Delta y(k-1))}^{2}{\small +}\frac{\beta +\varepsilon }{2}%
\sum\limits_{k=1}^{N}{\small y(k)}^{2} \\ 
{\small \leq }\frac{1}{2}{\small (\lambda }_{2}p_{\max }{\small -\xi (q)}%
q_{\min }{\small +\beta +\varepsilon )\delta }^{\prime 2}{\small <}\frac{1}{2%
}{\small (\lambda }_{2}p_{\max }{\small -\xi (q)}q_{\min }{\small +\alpha }%
_{3}{\small )\delta }^{\prime }{}^{2}%
\end{array}%
\end{equation*}%
\newline
Therefore all assumptions of Clark's theorem are satisfied and $J$ has at
least $N$ distinct pairs of critical points, that is BVP(\ref{BVP}) has at
least N distinct pairs of solutions.

\begin{example}
Let us consider the following BVP:
\end{example}

\begin{equation*}
\left\{ 
\begin{array}{c}
\Delta ^{4}y(k-2)-\Delta ^{2}y(k-1)+\frac{1}{3}y(k)^{3}+y(k)=0\text{ for }%
k\in Z[1,N] \\ 
y(-1)=y(0)=y(N+1)=y(N+2)=0%
\end{array}%
\right.
\end{equation*}

Here $f(k,s)=\frac{1}{3}s^{3}+s$ for $k\in Z[1,N]$, $p=q=1$ Then $f(k,s)$ is
odd and$\ \lim_{s\rightarrow \infty }\frac{\frac{1}{3}s^{3}+s}{s}{\small %
=+\infty }$ and $\lim_{s\rightarrow 0}\frac{\frac{1}{3}s^{3}+s}{s}=1<12=\min
\{12,16-\lambda _{1}\}$. Assumptions of theorem (\ref{theo1}) are satisfied
therefore this BVP has at least $2N$ solutions.

\end{document}